\tiny\color{gray},
\numberwithin{equation}{section}  \makeatletter\@addtoreset{equation}{section}
   \DeclareMathSymbol{\subsetneqq}{\mathbin}{AMSb}{36}
\newtheorem {theorem}{Theorem}[section]            \newtheorem {lemma}[theorem]{Lemma}
   \newtheorem {corollary}[theorem]{Corollary}     \newtheorem {remark}[theorem]{Remark}
\newtheorem {proposition}[theorem]{Proposition}       
\newcommand{\amnGamma}{a_{m,n}(\Gamma|\nu,\chi)}
\newcommand{\amnpqGamma}{a_{m,n}^{p,q}(\Gamma|\nu,\chi)}
\newcommand{\amnpqlambdaGamma}{a_{m,n}^{p,q}(\lambda\Gamma|\nu_{_{\lambda\Gamma}},\chi_{_{\lambda\Gamma}})}
\newcommand{\C}{\mathbb C}         \newcommand{\Z}{\mathbb Z}     \newcommand{\N}{\mathbb N} 
           \newcommand{\Ker}{K}
\newcommand{\bz}{\overline{z}}
\newcommand{\bgamma}{\overline{\gamma}}
  \newcommand{\scal}[1]{\left<#1\right>}  
\begin{document}
\title[Analytic and arithmetic properties of $ {\Ker}^\nu_{\Gamma,\chi}$]
{Analytic and arithmetic properties of the $(\Gamma,\chi)$-automorphic reproducing kernel function}
\thanks{A. El Fardi and A. Ghanmi are partially supported by the Hassan II Academy of Sciences and Technology. This work was partially supported by a grant from the Simons Foundation.}
\author[A. El Fardi, A. Ghanmi, L. Imlal, M. Souid El Ainin]{A. El Fardi, A. Ghanmi, L. Imlal, M. Souid El Ainin}
 \address{A.S.G. - L.A.M.A., Department of Mathematics, P.O. Box 1014,  Faculty of Sciences, Mohammed V University of Rabat, Morocco}
       \email{ag@fsr.ac.ma}
\date{}
\maketitle

\begin{abstract}
We consider the reproducing kernel function of the theta Bargmann-Fock Hilbert space associated to given full-rank lattice and pseudo-character, and we deal with some of its analytical and arithmetical properties. Specially, the distribution and discreteness of its zeros are examined and analytic sets inside a product of fundamental cells is characterized and shown to be finite and of cardinal less or equal to the dimension of the theta Bargmann-Fock Hilbert space.
Moreover, we obtain some remarkable lattice sums by evaluating the so-called complex Hermite-Taylor coefficients. Some of them generalize some of the arithmetic identities established by Perelomov in the framework of coherent states for the specific case of von Neumann lattice. Such complex Hermite-Taylor coefficients are nontrivial examples of the so-called lattice's functions according the Serre terminology.
 The perfect use of the basic properties of the complex Hermite polynomials is crucial in this framework.
\end{abstract}

\section{\quad Introduction}
So far, the focus of study in \cite{GI-JMP08,GI-JMP13,GHI-Rama14} has been the spectral properties of the so-called theta Bargmann-Fock Hilbert space ${\mathcal{O}}^{\nu}_{\Gamma,\chi}(\C)$ associated to given full-rank oriented lattice $\Gamma=\Z\omega_1+\Z \omega_2$ and pseudo-character $\chi$ (see also \cite{Bump02,Abdelkader1,Abdelkader2}).
 This space is defined to be the $L^2$-functional space of holomorphic $(\Gamma,\chi)$-theta functions $f$ on $\C$ of magnitude $\nu>0$, provided that the functional equation
\begin{equation}\label{EspO}
 f(z+\gamma)= \chi(\gamma) e^{\nu \left( z + \frac{\gamma} 2 \right)\bgamma }f(z)
\end{equation}
holds for all $z\in \C$ and all $\gamma\in \Gamma$. The pseudo-character nature of $\chi$, i.e.,
\begin{equation}\label{RDQ}
\chi(\gamma+\gamma')= {\chi}(\gamma){\chi}(\gamma')e^{\frac{{\nu}} 2(\gamma\overline{\gamma'} -\bgamma \gamma')}. 
\end{equation}
is required to ${\mathcal{O}}^{\nu}_{\Gamma,\chi}(\C)$ be a nonzero vector space. This is also a necessary condition and implies in particular that the imaginary part of $(\nu/\pi) \omega_1\overline{\omega_2}$ belongs to $\Z^+$. In this case,  ${\mathcal{O}}^{\nu}_{\Gamma,\chi}(\C)$ is moreover a reproducing kernel Hilbert space with respect to the positive definite hermitian product
\begin{align}
\scal{f, g}_{\nu,\Gamma}=\int_{\Lambda(\Gamma)} f(z) \overline{g(z)} e^{-\nu|z|^2}dm(z).
\label{InnerSP}\end{align}
Here $\Lambda(\Gamma)$ is any compact fundamental region for $\Gamma\backslash \C$ and $dm(z)= dxdy$; $z=x+iy$, denotes the Lebesgue measure on $\C$.
As pointed out in \cite{GI-JMP08} the reproducing kernel function is given by the absolutely and uniformly convergent series on compact subsets of $\C\times \C$,
    \begin{equation}\label{RepKer2}
 {\Ker}^\nu_{\Gamma,\chi}(z,w) := 
 \left( \frac{\nu}{\pi} \right) \sum_{\gamma\in \Gamma}\chi(\gamma)e^{-\frac \nu 2|\gamma|^2+\nu\big( z \bgamma  -\overline{w}\gamma +z \overline{w} \big)}.
\end{equation}

 Geometrically, the space ${\mathcal{O}}^{\nu}_{\Gamma,\chi}(\C)$ (under the assumption \eqref{RDQ}) can be realized as the space of holomorphic sections over the complex torus $\C/\Gamma$ of the holomorphic line bundle $L=(\C\times\C)/\Gamma$, constructed as the quotient of the trivial bundle over $\C$ by considering the $\Gamma$-action
$$
\gamma(z;v) := \left(z+\gamma; \chi(\gamma )e^{\nu \left( z + \frac{\gamma} 2 \right)\bgamma } .   v\right) ; \quad (z,v)\in \C\times \C.
$$
Accordingly, the dimension of ${\mathcal{O}}^\nu_{\Gamma,\chi}(\C)$ is known to be finite and given by the Pfaffian of the associated skew-symmetric form $E$.  In our case, $E(z,w):=(\nu/\pi)\Im m (z\bar w)$ and therefore
    \begin{equation}\label{dimFormula}
    \dim {\mathcal{O}}^{{\nu}}_{\Gamma,{\chi}}(\C)= \left(\frac\nu \pi\right) S(\Gamma),
        \end{equation}
where $S(\Gamma)$ is the cell area of $\Gamma$. Notice for instance that $\nu \geq \pi/S(\Gamma)$, otherwise ${\mathcal{O}}^{{\nu}}_{\Gamma,{\chi}}(\C)$ is trivial. The proof of \eqref{dimFormula} can be handled using Riemann-Roch theorem which is well-known in the theory of abelian varieties (see \cite{Frobenius1884,Igusa72,Swinnerton74,Lang82,Bump02,Mumford74,PolishChuk02}).
 It can also be done \`a la Selberg \cite{Selberg5789,Hejhal1998,Bump02} by determining the trace of the integral operator associated to $(\Gamma,\chi)$-automorphic kernel function ${\Ker}^\nu_{\Gamma,\chi}$  (\cite{GI-JMP08}).

In the present paper, we will concentrate on discussing some basic analytical and arithmetical properties of the reproducing kernel function ${\Ker}^\nu_{\Gamma,\chi}(z,w)$.
 More precisely, we show in Theorem \ref{Thm:M1} that the set $\mathcal{Z}({\Ker}^\nu_{\Gamma,\chi}) $ of zeros of ${\Ker}^\nu_{\Gamma,\chi}$ is symmetric, not isolated and its distribution is uniform, in the sense that $\mathcal{Z}({\Ker}^\nu_{\Gamma,\chi}) $ consists of the $\Gamma$-translation of the zeros of ${\Ker}^\nu_{\Gamma,\chi}(z,w)$ contained in a cartesian product of fundamental cells.  The systematic study of the zero set of ${\Ker}^\nu_{\Gamma,\chi}$ leads to a characterization of the common zeros (analytic set) in that cartesian product of fundamental cells, of all holomorphic functions belonging to the theta Bargmann-Fock Hilbert space ${\mathcal{O}}^{\nu}_{\Gamma,\chi}(\C)$. It is shown that this analytic set is finite and its cardinal is less or equal to $\left(\frac\nu \pi\right) S(\Gamma)$.
 In Theorem \ref{Thm:M2}, we give interesting expansion of ${\Ker}^\nu_{\Gamma,\chi}$ in power series and in terms of the so-called complex Hermite-Taylor coefficients. Moreover, we establish the connection to the Poincar\'e series associated to the monomials.
 Determination of zeros of ${\Ker}^\nu_{\Gamma,\chi}$ gives rise to interesting identities.
More remarkable lattice sums are also proved in Theorem \ref{Thm:M3} by evaluating the complex Hermite-Taylor coefficients (the coefficients of  ${\Ker}^\nu_{\Gamma,\chi}(z,w)$ when expanded as power series), generalizing certain arithmetic identities obtained by Perelomov \cite{Perelomov71} in the framework of coherent states for the specific case of von Neumann lattice \cite{vonNeumann1955} and rediscovered later by Boon and Zak \cite{BoonZak78b}.
 The proof we provide seems to be new, simpler and more direct. Theorem \ref{Thm:M4} is devoted to show that the complex Hermite-Taylor coefficients $ \amnpqGamma $ are lattice's functions in the sense of J-P. Serre \cite[Chapter VII]{Serre73}. 
\\

The rest of the paper is organized as follows.
\begin{itemize}
\item Section 2: Motivations and statement of main results.
\item Section 3: On ${\Ker}^\nu_{\Gamma,\chi}$ and proofs of Theorems \ref{Thm:M1} and \ref{Thm:M2}.
\item Section 4: Proofs of Theorems \ref{Thm:M3} and \ref{Thm:M4} related to the coefficients $ \amnpqGamma$.
\item Section 5: Concluding remarks.
\end{itemize}

\section{\quad Motivations and statement of main results}

The present work is motivated by the fact that for the von Neumann lattice \cite{vonNeumann1955}, the concrete description of ${\mathcal{O}}^{\nu}_{\Gamma,\chi}(\C)$ allows one to recover some arithmetic identities obtained by Perelomov \cite{Perelomov71} (see also \cite{BoonZak78b}).
In fact, for the specific case of $\nu=\pi/{S(\Gamma)}$ and $\chi$ being the "Weierstrass pseudo-character",
$ \chi(\gamma) = \chi_{_W}(\gamma)= +1 $ if $\gamma /2 \in \Gamma$ and  $ \chi(\gamma) = \chi_{_W}(\gamma)= -1 $ otherwise, the space ${\mathcal{O}}^{\nu}_{\Gamma,\chi}(\C)$ is one-dimensional and is generated by 
the modified Weierstrass $\sigma$-function \cite{GHI-Rama14}
\begin{equation}
\label{ModifiedSigma} \widetilde{\sigma}_\mu(z;\Gamma):= e^{-\frac 12 \mu z^2}\sigma(z;\Gamma)
= z e^{-\frac 12 \mu z^2} \prod^{\quad_{_{'}}}_{\gamma\in\Gamma} \Big(1-\frac z \gamma\Big)e^{z/\gamma + \frac 12 (z/ \gamma)^2} , 
\end{equation}
where the prime in the product excludes the term with $\gamma=0$, and $\sigma(z;\Gamma)$ denotes the classical Weierstrass $\sigma$-function
and $\mu =\mu(\Gamma)$ is an invariant of the lattice $\Gamma= \Z \omega_1 + \Z \omega_2$ given in terms of the Weierstrass zeta-function $\zeta(z;\Gamma)={\sigma^{'}(z;\Gamma)}/{\sigma(z;\Gamma)}$ 
by
 \begin{equation} \label{MuGamma}
\mu =\mu(\Gamma)=\frac{2i}S \Big({\zeta(\omega_1^*;\Gamma)\overline{\omega_2^*} -\zeta(\omega_2^*;\Gamma)\overline{\omega_1^*}}\Big) 
\end{equation}
with $\omega_\ell^*= 2\omega_\ell$; $\ell=1,2$.
In such case, up to nonzero constant $C$ the expression of ${\Ker}^\nu_{\Gamma,\chi_{_W}}(z,w)$  reduces further to
$$
{\Ker}^\nu_{\Gamma,\chi_{_W}}(z,w) = C \, e^{\frac 1 2 (\mu z^2 + \overline{\mu} \, \overline{w}^2)} \sigma(z;\Gamma) \overline{\sigma(w;\Gamma)}.
$$
Therefore, one concludes easily that the zeros of ${\Ker}^\nu_{\Gamma,\chi_{_W}}(z,w)$ are located in $(\Gamma\times \C) \cup (\C \times \Gamma)$.
As immediate consequence, one asserts that ${\Ker}^\nu_{\Gamma,\chi_{_W}}(0,0) = 0$, or equivalently
 \begin{equation} \label{PerelomocIda}
   \sum_{\gamma\in \Gamma}\chi_{_W}(\gamma)e^{-\frac \nu 2|\gamma|^2} = 0.
   \end{equation}
Thus, one recovers the arithmetic identity obtained by Perelomov in \cite[Eq. (47) with $k=0$]{Perelomov71} (see also (15) and (15a) in \cite{BoonZak78b}). 

For arbitrary lattice, the lattice sum in \eqref{PerelomocIda} as well as more arithmetic identities can arise by considering and evaluating the so-called complex Hermite-Taylor coefficients
    \begin{align}\label{aHmnRK}
\amnGamma  : = \sum_{\gamma\in \Gamma}\chi(\gamma)  e^{-\frac {\nu }2|\gamma|^2} H_{m,n}^\nu(\gamma;\bgamma ),
\end{align}
where $H_{m,n}^\nu(z,\bz )$ denote the weighted complex Hermite polynomials (see \cite{s} and the references therein)
         \begin{align}\label{chp}
       H_{m,n}^\nu(z,\bz ) :=  (-1)^{m+n}  e^{\nu |z|^2} \dfrac{\partial ^{m+n}}{\partial \bz^{m} \partial z^{n}} \left(e^{-\nu |z|^2 }\right) .
         \end{align}
This is possible thanks to the crucial observation that ${\Ker}^\nu_{\Gamma,\chi}(z,w)$ can be expanded as in \eqref{RepKer2a} below.
Thus, the present paper is aimed to investigate and complete the study of the kernel function given by both \eqref{RepKer2} and \eqref{RepKer2a} by considering further interesting basic properties of ${\Ker}^\nu_{\Gamma,\chi}(z,w)$. Special attention is given to zeros. Namely, we prove the following

\begin{theorem} \label{Thm:M1}
The set $\mathcal{Z}({\Ker}^\nu_{\Gamma,\chi}) $ of zeros of ${\Ker}^\nu_{\Gamma,\chi}$ is symmetric and is given by
$$ \mathcal{Z}({\Ker}^\nu_{\Gamma,\chi}) = \bigcup_{w\in \Lambda(\Gamma)} \bigg(\big(\mathcal{Z}(\varphi_w|_{\Lambda(\Gamma)}) + \Gamma\big) \times \{w\} \bigg) \cup  \bigcup_{w\in \Lambda(\Gamma)} \bigg( \{w\} \times (\mathcal{Z}(\varphi_w|_{\Lambda(\Gamma)}) + \Gamma) \bigg) ,$$
where $\mathcal{Z}(\varphi_w|_{\Lambda(\Gamma)})$ denotes the set of zeros of the function $\varphi_w(z):={\Ker}^\nu_{\Gamma,\chi}(z,w)$ contained in an arbitrary fundamental cell $\Lambda(\Gamma)$. Moreover, the analytic set (the common zeros) of the theta Bargmann-Fock Hilbert space $\bigcap\limits_{f\in {\mathcal{O}}^\nu_{\Gamma,\chi}(\C)} \mathcal{Z}(f|_{\Lambda(\Gamma)})$ reduces further to
$$  \Xi := \left\{ \widetilde{w} \in \Lambda(\Gamma); \, \varphi_{\widetilde{w}} \equiv 0 \mbox{ on } \Lambda(\Gamma) \right\} .$$
This set $\Xi$ is finite and its cardinal is less or equal to the dimension of ${\mathcal{O}}^\nu_{\Gamma,\chi}(\C)$.
\end{theorem}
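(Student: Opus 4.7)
The plan proceeds in three stages. \textbf{First stage (symmetry and quasi-periodic structure of $\mathcal{Z}(\Ker^\nu_{\Gamma,\chi})$).} I would invoke the fact that the kernel of a reproducing kernel Hilbert space is Hermitian symmetric, i.e.\ $\Ker^\nu_{\Gamma,\chi}(w,z)=\overline{\Ker^\nu_{\Gamma,\chi}(z,w)}$; this is also directly visible from the series \eqref{RepKer2} after the substitution $\gamma\mapsto -\gamma$ together with the identity $\overline{\chi(-\gamma)}=\chi(\gamma)$ that follows from \eqref{RDQ}. This immediately symmetrizes the zero set under $(z,w)\mapsto(w,z)$. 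Next, for each fixed $w\in\C$, the slice $\varphi_w$ lies in $\mathcal{O}^\nu_{\Gamma,\chi}(\C)$ by construction, so the quasi-periodicity law \eqref{EspO} gives
\[
\varphi_w(z+\gamma)=\chi(\gamma)\,e^{\nu(z+\gamma/2)\bgamma}\,\varphi_w(z).
\]
Since the multiplier never vanishes, one has $\mathcal{Z}(\varphi_w)=\mathcal{Z}(\varphi_w|_{\Lambda(\Gamma)})+\Gamma$; combined with the Hermitian symmetry, which transfers the analogous $\Gamma$-translation behaviour to the second variable, this produces the union decomposition announced in the statement.

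\textbf{Second stage (the analytic set equals $\Xi$).} The key tool here is the reproducing identity $f(w)=\langle f,\varphi_w\rangle_{\nu,\Gamma}$. If $\widetilde{w}\in\Lambda(\Gamma)$ is a common zero of all elements of $\mathcal{O}^\nu_{\Gamma,\chi}(\C)$, then $\langle f,\varphi_{\widetilde{w}}\rangle_{\nu,\Gamma}=f(\widetilde{w})=0$ for every $f$ in the space, which forces $\varphi_{\widetilde{w}}\equiv 0$; conversely, if $\varphi_{\widetilde{w}}\equiv 0$ then trivially $f(\widetilde{w})=\langle f,\varphi_{\widetilde{w}}\rangle_{\nu,\Gamma}=0$ for every $f$. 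Hence $\bigcap_{f\in \mathcal{O}^\nu_{\Gamma,\chi}(\C)}\mathcal{Z}(f|_{\Lambda(\Gamma)})=\Xi$.

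\textbf{Third stage (finiteness and the dimensional bound).} When $\mathcal{O}^\nu_{\Gamma,\chi}(\C)=\{0\}$ the statement is vacuous, so I fix any nonzero $f_0$ in the space. Since $\Xi\subseteq \mathcal{Z}(f_0|_{\Lambda(\Gamma)})$, it suffices to count zeros of a single nontrivial theta function in a fundamental cell. Applying the argument principle on $\partial\Lambda(\Gamma)$, with opposite sides paired through the quasi-periodicity \eqref{EspO}, the number of zeros (counted with multiplicity) of $f_0$ inside $\Lambda(\Gamma)$ equals the degree of the associated holomorphic line bundle on $\C/\Gamma$, which by \eqref{dimFormula} is precisely $(\nu/\pi)S(\Gamma)=\dim\mathcal{O}^\nu_{\Gamma,\chi}(\C)$. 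The main technical point I anticipate is exactly this counting step---cleanly handling the boundary contributions arising from the two independent generators $\omega_1,\omega_2$ through \eqref{EspO}---but once in place, both the finiteness of $\Xi$ and the cardinality bound $|\Xi|\leq \dim\mathcal{O}^\nu_{\Gamma,\chi}(\C)$ follow at once.
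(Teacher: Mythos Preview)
Your proposal is correct and follows essentially the same route as the paper: Hermitian symmetry of the kernel for the symmetry of $\mathcal{Z}(\Ker^\nu_{\Gamma,\chi})$, the nonvanishing automorphy factor in \eqref{EspO} for the $\Gamma$-translation structure, the reproducing identity for the characterization of $\Xi$, and the argument principle on $\partial\Lambda(\Gamma)$ for the zero count. Your Stage~2 is in fact slightly cleaner than the paper's: to show that a common zero $\widetilde{w}$ lies in $\Xi$ you argue directly that $\varphi_{\widetilde{w}}$ is orthogonal to the whole space and hence vanishes, whereas the paper closes the loop via the chain $\Xi\subset\bigcap_f\mathcal{Z}(f)\subset\bigcap_w\mathcal{Z}(\varphi_w)\subset\Xi$, using the Hermitian symmetry $\varphi_w(z_0)=0\Leftrightarrow\varphi_{z_0}(w)=0$ for the last inclusion.
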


\begin{remark}\label{Rem:M1}
The zero set $\mathcal{Z}({\Ker}^\nu_{\Gamma,\chi}|_{\Lambda(\Gamma)\times \Lambda(\Gamma)})$ is not-discrete and the distribution of zeros of ${\Ker}^\nu_{\Gamma,\chi}$ is uniform in the sense that
$$\mathcal{Z}({\Ker}^\nu_{\Gamma,\chi}) =  \mathcal{Z}({\Ker}^\nu_{\Gamma,\chi}|_{\Lambda(\Gamma)\times \Lambda(\Gamma)}) + \Gamma \times \Gamma.$$
The analytic set of the family of functions $\varphi_w|_{\Lambda(\Gamma)}$; $w\in \Lambda(\Gamma)$, coincides with $\Xi$.
\end{remark}

The second main theorem concerns the expansion in power series of ${\Ker}^\nu_{\Gamma,\chi}$ and the connection to $\mathcal{P}^\nu_{\Gamma,\chi} (e_m) $, the periodization \`a la Poincar\'e of the monomials $e_m(z) = z^m$. The $(\Gamma,\chi)$-Poincar\'e theta operator is defined by
    \begin{align}\label{PoincareOp}
 \mathcal{P}^\nu_{\Gamma,\chi} (\psi )(z)
   &: = \sum_{\gamma\in \Gamma} \chi(\gamma) e^{ -\frac{\nu} 2 |\gamma|^2 +  \nu z \bgamma } \psi (z-\gamma ),
\end{align}
provided that the series converges. More precisely, we assert

\begin{theorem} \label{Thm:M2}
The $(\Gamma,\chi)$-automorphic reproducing kernel function can be expanded in power series as follows
   \begin{align}\label{RepKer2a}
 {\Ker}^\nu_{\Gamma,\chi}(z,w)  &= \left( \frac{\nu}{\pi} \right) \sum_{m,n=0}^{\infty}  (-1)^m\amnGamma   \frac{z^n \overline{w}^m }{m!n!} ,
\end{align}
where the coefficients $\amnGamma  $ stand for
    \begin{align}\label{aHmnRK}
\amnGamma  : = \sum_{\gamma\in \Gamma}\chi(\gamma) e^{-\frac{\nu}{2} |\gamma|^2} H_{m,n}^\nu(\gamma;\bgamma ).
\end{align}
Moreover, its expression in terms of the automorphic functions obtained by periodization \`a la Poincar\'e of the monomials is given by
 \begin{align}\label{RepKer2c}
 {\Ker}^\nu_{\Gamma,\chi}(z,w)&=  \left( \frac{\nu}{\pi} \right) \sum_{m=0}^{\infty}  \left[\mathcal{P}^\nu_{\Gamma,\chi} (e_m)\right](z) \frac{(\nu \overline{w})^m }{m!}.
\end{align}
\end{theorem}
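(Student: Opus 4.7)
The proof rests on the generating function identity for the complex Hermite polynomials,
\begin{equation*}
\sum_{m,n=0}^\infty H_{m,n}^\nu(\zeta,\bar\zeta)\,\frac{u^m v^n}{m!\,n!} \,=\, e^{\nu(u\zeta + v\bar\zeta - uv)},\qquad u,v,\zeta\in\C,
\end{equation*}
which is immediate from the Rodrigues-type definition \eqref{chp} together with the translation formula $e^{-u\partial_{\bar\zeta}-v\partial_\zeta}e^{-\nu|\zeta|^2}=e^{-\nu(\zeta-v)(\bar\zeta-u)}$. This identity is the bridge between the ``exponential'' form \eqref{RepKer2} of the kernel and its polynomial/analytic coefficients.

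For the first expansion \eqref{RepKer2a}, I would specialize this identity at $\zeta=\gamma$, $u=-\bw$, $v=z$, obtaining
\begin{equation*}
e^{\nu(z\bgamma - \bw\gamma + z\bw)} \,=\, \sum_{m,n=0}^\infty (-1)^m H_{m,n}^\nu(\gamma,\bgamma)\,\frac{\bw^{m}\,z^{n}}{m!\,n!},
\end{equation*}
which is exactly the $\gamma$-dependent exponential appearing in \eqref{RepKer2}. Inserting this into \eqref{RepKer2} and swapping the order of the $\gamma$- and $(m,n)$-summations then yields \eqref{RepKer2a} with $\amnGamma$ as in \eqref{aHmnRK}.

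For the second expansion \eqref{RepKer2c}, I would instead factor the exponential in \eqref{RepKer2} as
\begin{equation*}
 e^{-\frac{\nu}{2}|\gamma|^2 + \nu z\bgamma - \nu\bw\gamma + \nu z\bw} \,=\, e^{-\frac{\nu}{2}|\gamma|^2 + \nu z\bgamma}\cdot e^{\nu \bw(z-\gamma)}
\end{equation*}
and Taylor-expand only the rightmost factor, $e^{\nu \bw(z-\gamma)} = \sum_{m\geq 0}(\nu\bw)^m(z-\gamma)^m/m!$. After exchanging the summations, the inner $\gamma$-sum is, by the very definition \eqref{PoincareOp} of the Poincar\'e operator applied to $e_m(z)=z^m$, equal to $[\mathcal{P}^\nu_{\Gamma,\chi}(e_m)](z)$, and \eqref{RepKer2c} drops out.

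The only nontrivial step is the justification of the two interchanges of summation. Both reduce to absolute convergence of the corresponding iterated series, which follows from the Gaussian factor $e^{-\frac{\nu}{2}|\gamma|^2}$ dominating the at-most-polynomial growth (in $|\gamma|$) of $H_{m,n}^\nu(\gamma,\bgamma)$, with the auxiliary variables $|z|,|\bw|$ confined to a compact set. This is essentially the same estimate that already guarantees the absolute and uniform convergence of \eqref{RepKer2} on compact subsets of $\C\times\C$, so no new analytic difficulty arises; I expect this to be the main, albeit standard, technical point in a fully rigorous write-up.
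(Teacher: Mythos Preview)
Your proof of \eqref{RepKer2a} is exactly the paper's: both specialize the bilinear generating function \eqref{genFctgHermite} at $a=-\bw$, $b=z$ and interchange summations.

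For \eqref{RepKer2c} you take a more direct route than the paper. The paper first invokes the one-variable generating identity
\[
\nu^{n}(\overline{\xi}-z)^{n}e^{\nu\xi z}=\sum_{m=0}^{\infty}\frac{z^{m}}{m!}H_{m,n}^{\nu}(\xi,\overline{\xi})
\]
to obtain the intermediate relation
\[
[\mathcal{P}^\nu_{\Gamma,\chi}(e_m)](z)=\frac{(-1)^{m}}{\nu^{m}}\sum_{n=0}^{\infty}\amnGamma\,\frac{z^{n}}{n!},
\]
and then substitutes this into \eqref{RepKer2a} to reach \eqref{RepKer2c}. You instead factor the exponent in \eqref{RepKer2} as $e^{-\frac{\nu}{2}|\gamma|^{2}+\nu z\bgamma}\cdot e^{\nu\bw(z-\gamma)}$, Taylor-expand the second factor, and recognize the Poincar\'e operator directly. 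Your argument is shorter and avoids the second Hermite identity altogether; the paper's detour, on the other hand, yields the explicit formula \eqref{Peren} linking $\mathcal{P}^\nu_{\Gamma,\chi}(e_m)$ to the coefficients $\amnGamma$, which is of some independent interest even if not strictly needed for the theorem. Both approaches are correct, and your remark on the justification of the interchanges is adequate at the level of detail the paper itself provides.
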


The involved $\amnGamma$, called here complex Hermite-Taylor coefficients, possess interesting arithmetic properties.
More generally, we consider the quantities
    \begin{align}\label{aHmnpqRK}
 \amnpqGamma  : = \sum_{\gamma\in \Gamma}\chi(\gamma)\gamma^p\bgamma^q e^{-\frac{\nu}{2}|\gamma|^2} H_{m,n}^\nu(\gamma;\bgamma )
\end{align}
for given integers $m,n,p,q$. Thus, we prove

 \begin{theorem} \label{Thm:M3}
   Assume that $\chi$ is real-valued. Then, the generalized complex Hermite-Taylor coefficients $\amnpqGamma $ vanish
   \begin{align}\label{amn0}
    \sum_{\gamma\in \Gamma} \chi(\gamma) \gamma^p \bgamma^q e^{-\frac{\nu}{2}|\gamma|^2} H^\nu_{m,n}(\gamma,\bgamma) = 0
    \end{align}
   for every integers $m,n,p,q$ such that $m+n+p+q$ is odd. 
\end{theorem}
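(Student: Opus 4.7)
The proof I propose is a simple parity argument exploiting the involution $\gamma \mapsto -\gamma$ on the lattice. Since $\Gamma$ is an additive subgroup of $\C$, it is stable under negation, so re-indexing the sum defining $\amnpqGamma$ by $\gamma' = -\gamma$ leaves its value unchanged. The plan is therefore to examine how each of the four factors in the summand
$$\chi(\gamma)\,\gamma^p\bgamma^q\, e^{-\frac{\nu}{2}|\gamma|^2}\, H^\nu_{m,n}(\gamma,\bgamma)$$
transforms under this substitution and read off the total sign.

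The Gaussian $e^{-\frac{\nu}{2}|\gamma|^2}$ is manifestly invariant, and the monomial part obviously picks up a factor $(-1)^{p+q}$. For the complex Hermite polynomial I would invoke the Rodrigues formula \eqref{chp}: applying the chain rule to each $\partial_{-z}$ and $\partial_{-\bz}$ in $(-1)^{m+n}e^{\nu|z|^2}\partial_{\bz}^{m}\partial_z^{n}e^{-\nu|z|^2}$ produces an overall $(-1)^{m+n}$, while $e^{\pm \nu|z|^{2}}$ is unchanged. (Equivalently one can argue from the explicit polynomial expansion of $H^\nu_{m,n}$ in $z^{n-k}\bz^{m-k}$, whose total degree has parity $(-1)^{m+n}$.) Thus $H^\nu_{m,n}(-\gamma,-\bgamma)=(-1)^{m+n}H^\nu_{m,n}(\gamma,\bgamma)$.

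The only nontrivial factor is $\chi$. Setting $\gamma'=-\gamma$ in the pseudo-character relation \eqref{RDQ}, and noting that $\chi(0)=1$ (which follows from $\chi(0)=\chi(0)^{2}$ and the non-triviality of the space), yields $\chi(\gamma)\chi(-\gamma)=1$. Moreover, the exponent $\tfrac{\nu}{2}(\gamma\overline{\gamma'}-\bgamma\gamma')= i\nu\,\Im m(\gamma\overline{\gamma'})$ in \eqref{RDQ} is purely imaginary, so taking moduli shows that $|\chi|$ is a character of $\Gamma$; combined with the hypothesis that $\chi$ is real-valued this forces $\chi(\gamma)\in\{-1,+1\}$, and hence $\chi(-\gamma)=\chi(\gamma)$.

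Putting the four transformations together gives
$$\amnpqGamma = (-1)^{m+n+p+q}\,\amnpqGamma,$$
from which \eqref{amn0} is immediate when $m+n+p+q$ is odd. The only delicate step, and the one I would single out as the main pitfall, is the handling of $\chi$ under $\gamma\mapsto -\gamma$: it is crucial to observe that a \emph{real-valued} pseudo-character is automatically $\{\pm 1\}$-valued (via $|\chi|\equiv 1$), so that the identity $\chi(\gamma)\chi(-\gamma)=1$ can be upgraded to $\chi(-\gamma)=\chi(\gamma)$; everything else is a routine parity bookkeeping.
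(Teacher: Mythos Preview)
Your proof is correct and follows essentially the same parity argument as the paper: re-index via $\gamma\mapsto-\gamma$, use $H^\nu_{m,n}(-\gamma,-\bgamma)=(-1)^{m+n}H^\nu_{m,n}(\gamma,\bgamma)$ and $\chi(-\gamma)=\chi(\gamma)$ to obtain $\amnpqGamma=(-1)^{m+n+p+q}\amnpqGamma$. The paper records the evenness of $\chi$ more tersely as $\chi(-\gamma)=\overline{\chi(\gamma)}=\chi(\gamma)$ (cf.\ Lemma~\ref{Lem;amnpq}), but your expanded justification via $|\chi|\equiv 1$ and $\chi(\gamma)\chi(-\gamma)=1$ is the same content.
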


\begin{remark}\label{Rem:M3}
 Under the assumption $\chi$ is a real-valued pseudo-character, we have $a_{2m+1,2n}(\Gamma|\nu,\chi)=0=a_{2m,2n+1}(\Gamma|\nu,\chi)$.
  In particular, we have the arithmetic identities
    \begin{align}\label{amn01}
    \sum_{\gamma\in \Gamma} \chi(\gamma) \gamma^{2k+1}  e^{-\frac{\nu}{2}|\gamma|^2} = 0
    \end{align}
    and
      \begin{align}\label{amn02}
    \sum_{\gamma\in \Gamma} \chi(\gamma) \bgamma^{2k+1} e^{-\frac{\nu}{2}|\gamma|^2}  = 0
    \end{align}
that follow by taking $p=q=0$ in \eqref{amn0}. These identities generalize those obtained in \cite{Perelomov71,BoonZak78b}. 
Thus, the expansion series \eqref{RepKer2a} of the reproducing kernel function ${\Ker}^\nu_{\Gamma,\chi}$ reduces further to the following
 \begin{align}\label{RepKer2b}
 {\Ker}^\nu_{\Gamma,\chi}(z,w)&= \left( \frac{\nu}{\pi} \right) \left(\sum_{m,n=0}^{\infty} a^\nu_{\Gamma,\chi}(2m,2n) \frac{z^{2n} \overline{w}^{2m} }{(2m)!(2n)!} - \sum_{m,n=0}^{\infty} a^\nu_{\Gamma,\chi}(2m+1,2n+1) \frac{z^{2n+1} \overline{w}^{2m+1} }{(2m+1)!(2n+1)!}\right).
\end{align}
\end{remark}

The lattice sums $\amnpqGamma $ will be seen as functions in the lattice $\Gamma\in \mathcal{L}$, where $\mathcal{L}$ denotes the set of all lattices in $\C$.
Hence, the last theorem is devoted to show that the complex Hermite-Taylor coefficients $\amnGamma $ are lattice's functions.

\begin{theorem} \label{Thm:M4}
There exist specific $\nu$ and $\chi$ such that $\amnpqGamma $ are lattice's functions in sense that for every nonzero complex number $\lambda$, we have
    \begin{align}\label{LatticeFct}
   \amnpqlambdaGamma     =\frac{ \lambda^{p} \overline{\lambda}^{q} }{ \overline{\lambda}^{m}\lambda^{n}  }  \amnpqGamma .
    \end{align}
\end{theorem}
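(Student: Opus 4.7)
The plan is to exhibit the natural transformation rules on the data $(\nu,\chi)$ under the rescaling $\Gamma\mapsto\lambda\Gamma$, and then track how each factor in the defining sum \eqref{aHmnpqRK} transforms under the change of variable $\gamma'=\lambda\gamma$.

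First I would set $\nu_{\lambda\Gamma}:=\nu/|\lambda|^2$, which is forced by requiring $\nu_{\lambda\Gamma}|\lambda\gamma|^2=\nu|\gamma|^2$ so that the Gaussian weight in \eqref{aHmnpqRK} is invariant, and I would define the pseudo-character on $\lambda\Gamma$ by $\chi_{\lambda\Gamma}(\lambda\gamma):=\chi(\gamma)$ for every $\gamma\in\Gamma$. Using the elementary identity
\[ \lambda\gamma\,\overline{\lambda\gamma'} - \overline{\lambda\gamma}\,\lambda\gamma' \,=\, |\lambda|^2\bigl(\gamma\overline{\gamma'}-\bgamma\gamma'\bigr),\]
one verifies directly that $\chi_{\lambda\Gamma}$ satisfies the pseudo-character relation \eqref{RDQ} on $\lambda\Gamma$ with magnitude $\nu_{\lambda\Gamma}$, so that the data $(\nu_{\lambda\Gamma},\chi_{\lambda\Gamma})$ is admissible.

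Next, I would derive the homogeneity identity for the complex Hermite polynomials,
\[ H_{m,n}^{\nu/|\lambda|^2}(\lambda z,\overline{\lambda z}) \,=\, \lambda^{-n}\overline{\lambda}^{-m}\, H_{m,n}^{\nu}(z,\bz),\]
directly from the Rodrigues-type definition \eqref{chp}: setting $w=\lambda z$ yields $\partial_w=\lambda^{-1}\partial_z$ and $\partial_{\bw}=\overline{\lambda}^{-1}\partial_{\bz}$, together with the Gaussian invariance $e^{-(\nu/|\lambda|^2)|w|^2}=e^{-\nu|z|^2}$; inserting these into \eqref{chp} and pulling the scalar $\lambda^{-n}\overline{\lambda}^{-m}$ past the iterated derivative yields the claim.

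Finally, in the series defining $\amnpqlambdaGamma$ I would reindex via $\gamma'=\lambda\gamma$ with $\gamma$ ranging over $\Gamma$. The factors $(\lambda\gamma)^p\,\overline{\lambda\gamma}^q$, $\exp(-(\nu_{\lambda\Gamma}/2)|\lambda\gamma|^2)$ and $H_{m,n}^{\nu_{\lambda\Gamma}}(\lambda\gamma,\overline{\lambda\gamma})$ produce the respective scalar prefactors $\lambda^p\overline{\lambda}^q$, $1$ and $\lambda^{-n}\overline{\lambda}^{-m}$, which can all be pulled out of the sum; combined with $\chi_{\lambda\Gamma}(\lambda\gamma)=\chi(\gamma)$, what remains is exactly $\amnpqGamma$ multiplied by $\lambda^p\overline{\lambda}^q/(\overline{\lambda}^m\lambda^n)$, which is precisely \eqref{LatticeFct}. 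The main, and rather mild, obstacle is the admissibility check for the rescaled pseudo-character, which is what the phrase ``there exist specific $\nu$ and $\chi$'' in the statement really refers to; once this is in place, everything else reduces to bookkeeping of scalar factors.
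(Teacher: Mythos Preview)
Your proposal is correct and follows essentially the same route as the paper: postulate the rescaling rules $\nu_{\lambda\Gamma}=|\lambda|^{-2}\nu_\Gamma$ and $\chi_{\lambda\Gamma}(\lambda\gamma)=\chi_\Gamma(\gamma)$, establish the homogeneity $H_{m,n}^{\nu_{\lambda\Gamma}}(\lambda\gamma,\overline{\lambda\gamma})=\lambda^{-n}\overline{\lambda}^{-m}H_{m,n}^{\nu_\Gamma}(\gamma,\bgamma)$, and then reindex the sum. The only cosmetic difference is that the paper derives the Hermite homogeneity from the explicit polynomial expansion of $H_{m,n}^\nu$, whereas you obtain it directly from the Rodrigues-type definition \eqref{chp}; your verification that the rescaled $\chi_{\lambda\Gamma}$ still satisfies \eqref{RDQ} is a point the paper leaves implicit and instead addresses, in a remark after the proof, by exhibiting a concrete family $(\nu_\Gamma,\chi_\Gamma)$ (e.g.\ the Weierstrass pseudo-character together with a suitable $\nu_\Gamma$) for which the hypothesis holds.
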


\begin{remark}
The generalized complex Hermite-Taylor coefficients $\amnpqGamma$ are nontrivial examples of the so-called lattice's functions according the Serre terminology \cite[Chap VII]{Serre73}.
\end{remark}

\section{\quad On ${\Ker}^\nu_{\Gamma,\chi}$ and proofs of Theorems \ref{Thm:M1} and \ref{Thm:M2}}

The following result is easy to obtain. 

\begin{proposition} \label{prop:ExpPositivity}
The $z$-function
$$
\sum_{\gamma\in \Gamma}\chi(\gamma)e^{-\frac \nu 2|\gamma|^2+\nu\big( z \bgamma  -\overline{z}\gamma \big)} $$
is nonnegative real-valued function on the diagonal of $\C$.
\end{proposition}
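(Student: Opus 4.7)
The plan is to exploit the general positivity of any reproducing kernel on the diagonal, namely $K(z,z)=\|K(\cdot,z)\|^2\geq 0$. First I would rewrite the sum appearing in the statement in terms of the restriction of ${\Ker}^\nu_{\Gamma,\chi}$ to the diagonal $w=z$ of $\C\times\C$. Setting $w=z$ in the defining series \eqref{RepKer2} gives
$$
{\Ker}^\nu_{\Gamma,\chi}(z,z) \;=\; \left(\frac{\nu}{\pi}\right) e^{\nu|z|^2} \sum_{\gamma\in\Gamma} \chi(\gamma)\, e^{-\frac{\nu}{2}|\gamma|^2 + \nu(z\bgamma - \overline{z}\gamma)},
$$
so that the function whose nonnegativity is claimed equals $(\pi/\nu)\,e^{-\nu|z|^2}\,{\Ker}^\nu_{\Gamma,\chi}(z,z)$. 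Since the prefactor $(\pi/\nu)\,e^{-\nu|z|^2}$ is strictly positive for every $z\in\C$, the claim reduces to showing that ${\Ker}^\nu_{\Gamma,\chi}(z,z)$ is real-valued and nonnegative.

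Next I would invoke the reproducing property of ${\Ker}^\nu_{\Gamma,\chi}$ on the Hilbert space $\mathcal{O}^{\nu}_{\Gamma,\chi}(\C)$ equipped with the scalar product \eqref{InnerSP}, as already recalled in the introduction. For each fixed $z\in\C$ the function $\varphi_z(\cdot):={\Ker}^\nu_{\Gamma,\chi}(\cdot,z)$ belongs to $\mathcal{O}^{\nu}_{\Gamma,\chi}(\C)$ and satisfies $f(z)=\langle f,\varphi_z\rangle_{\nu,\Gamma}$ for every $f\in \mathcal{O}^{\nu}_{\Gamma,\chi}(\C)$. Taking $f=\varphi_z$ then yields
$$
{\Ker}^\nu_{\Gamma,\chi}(z,z) \;=\; \varphi_z(z) \;=\; \langle \varphi_z,\varphi_z\rangle_{\nu,\Gamma} \;=\; \|\varphi_z\|^2_{\nu,\Gamma} \;\geq\; 0,
$$
which is manifestly real and nonnegative. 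Combined with the first step, this proves the proposition.

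There is essentially no obstacle: the statement is a transparent manifestation of the general positivity of reproducing kernels on the diagonal. The only ingredients used are the explicit series \eqref{RepKer2} and the reproducing property of ${\Ker}^\nu_{\Gamma,\chi}$ on $\mathcal{O}^{\nu}_{\Gamma,\chi}(\C)$, both of which have already been recorded in the excerpt.
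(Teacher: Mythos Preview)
Your proof is correct and follows essentially the same route as the paper: both reduce the sum to a positive multiple of ${\Ker}^\nu_{\Gamma,\chi}(z,z)$ and then use the reproducing property to identify ${\Ker}^\nu_{\Gamma,\chi}(z,z)$ with $\|\varphi_z\|^2_{\nu,\Gamma}\geq 0$. The paper writes this last step as the integral $\int_{\Lambda(\Gamma)}|{\Ker}^\nu_{\Gamma,\chi}(z,w)|^2 e^{-\nu|w|^2}\,dm(w)$, which is exactly your $\langle\varphi_z,\varphi_z\rangle_{\nu,\Gamma}$.
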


\begin{proof}
This follows from the fact that
    \begin{equation}\label{RepKerzz}
 {\Ker}^\nu_{\Gamma,\chi}(z,z) =
 \left( \frac{\nu}{\pi} \right) \sum_{\gamma\in \Gamma}\chi(\gamma)e^{-\frac \nu 2|\gamma|^2+\nu\big( z \bgamma  -\overline{z}\gamma + |z|^2 \big)},
\end{equation}
and that the kernel function ${\Ker}^\nu_{\Gamma,\chi}$ is nonnegative real-valued function on the diagonal of $\C\times \C$ for satisfying
\begin{align}
{\Ker}^\nu_{\Gamma,\chi}(z,z)  = \int_{\Lambda(\Gamma)}  |{\Ker}^\nu_{\Gamma,\chi}(z,w) |^2 e^{-\nu|w|^2} dm(w) \geq 0 .
\end{align}
Indeed, making use of the reproducing property (\cite{GI-JMP08})
\begin{equation}\label{Rep2}
f(z) =  \int_{\Lambda(\Gamma)}  {\Ker}^\nu_{\Gamma,\chi}(z,w) f(w) e^{-\nu|w|^2}dm(w) ,
\end{equation}
for every $f\in {\mathcal{O}}^{\nu}_{\Gamma,\chi}(\C)$, one sees that for every fixed $\xi \in \C$ we have
$$
{\Ker}^\nu_{\Gamma,\chi}(z,\xi) =  \int_{\Lambda(\Gamma)}  {\Ker}^\nu_{\Gamma,\chi}(z,w) {\Ker}^\nu_{\Gamma,\chi}(w,\xi) e^{-\nu|w|^2}dm(w) .
$$
\end{proof}

The proof of Theorem \ref{Thm:M1} is contained in Lemmas \ref{Z1} and \ref{Z2}, Propositions \ref{Z3} and \ref{prop:Xi}, and Remarks \ref{Rem3} and \ref{setZ1} below.

  \begin{lemma}\label{Z1}
\label{prop:ExpSymmetrie}
The zeros set $\mathcal{Z}({\Ker}^\nu_{\Gamma,\chi}) $ of ${\Ker}^\nu_{\Gamma,\chi}$ is symmetric.
\end{lemma}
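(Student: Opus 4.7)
The plan is to deduce the symmetry of $\mathcal{Z}({\Ker}^\nu_{\Gamma,\chi})$ under the swap $(z,w)\leftrightarrow(w,z)$ from the standard Hermitian identity
\[
\overline{{\Ker}^\nu_{\Gamma,\chi}(z,w)}={\Ker}^\nu_{\Gamma,\chi}(w,z),
\]
which I would verify directly from the explicit series \eqref{RepKer2}. Once this identity is in hand, $(z,w)\in\mathcal{Z}({\Ker}^\nu_{\Gamma,\chi})$ if and only if $(w,z)\in\mathcal{Z}({\Ker}^\nu_{\Gamma,\chi})$, which is exactly the assertion.

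The first step is to record two algebraic consequences of the pseudo-character relation \eqref{RDQ}. Taking moduli in \eqref{RDQ} shows that $|\chi|$ is multiplicative on $\Gamma$, since $\gamma\overline{\gamma'}-\bgamma\gamma'$ is purely imaginary; together with $\chi(0)=1$ (forced by $\chi(0)=\chi(0)^2$ and $\chi\not\equiv 0$) this gives $|\chi(\gamma)|=1$. Setting $\gamma'=-\gamma$ in \eqref{RDQ} makes the exponential factor trivial, so $1=\chi(0)=\chi(\gamma)\chi(-\gamma)$, whence
\[
\chi(-\gamma)=\chi(\gamma)^{-1}=\overline{\chi(\gamma)}.
\]

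The second step is the term-by-term manipulation on \eqref{RepKer2}, which is legitimate because the series converges absolutely and uniformly on compacts. Conjugating gives
\[
\overline{{\Ker}^\nu_{\Gamma,\chi}(z,w)}=\left(\frac{\nu}{\pi}\right)\sum_{\gamma\in\Gamma}\overline{\chi(\gamma)}\,e^{-\frac{\nu}{2}|\gamma|^2+\nu(\bar z\gamma-w\bgamma+\bar z w)}.
\]
Reindexing via the bijection $\gamma\mapsto -\gamma$ of $\Gamma$ and using $\overline{\chi(-\gamma)}=\chi(\gamma)$ yields
\[
\overline{{\Ker}^\nu_{\Gamma,\chi}(z,w)}=\left(\frac{\nu}{\pi}\right)\sum_{\gamma\in\Gamma}\chi(\gamma)\,e^{-\frac{\nu}{2}|\gamma|^2+\nu(w\bgamma-\bar z\gamma+w\bar z)}={\Ker}^\nu_{\Gamma,\chi}(w,z),
\]
where the last equality is just \eqref{RepKer2} with the roles of $z$ and $w$ interchanged.

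The main obstacle is minor and purely bookkeeping: making sure the two auxiliary identities about $\chi$ are in place so that the change of variable $\gamma\mapsto -\gamma$ converts the conjugated series into the one representing ${\Ker}^\nu_{\Gamma,\chi}(w,z)$. No analytic subtlety intervenes beyond the uniform convergence already recorded after \eqref{RepKer2}, and the desired symmetry of the zero set follows at once.
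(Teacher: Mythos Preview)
Your proof is correct and follows essentially the same approach as the paper: both deduce the symmetry of the zero set from the Hermitian identity ${\Ker}^\nu_{\Gamma,\chi}(z,w)=\overline{{\Ker}^\nu_{\Gamma,\chi}(w,z)}$. The only difference is that you verify this identity explicitly from the series \eqref{RepKer2} (using $\chi(-\gamma)=\overline{\chi(\gamma)}$ and the substitution $\gamma\mapsto-\gamma$), whereas the paper simply quotes it as a known property of the reproducing kernel.
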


\begin{proof}
The symmetry of $\mathcal{Z}({\Ker}^\nu_{\Gamma,\chi}) $  follows easily since ${\Ker}^\nu_{\Gamma,\chi}(z,w) =\overline{{\Ker}^\nu_{\Gamma,\chi}(w,z)}$.
\end{proof}

  \begin{lemma} \label{Z2}
\label{prop:ExpZeros} Let $(z_0,w_0)\in \C\times \C$. Then,
${\Ker}^\nu_{\Gamma,\chi}(z_0,w_0)=0$ if and only if $ \left( \{z_0\} + \Gamma \right) \times \left( \{w_0\} + \Gamma )\right)$
is contained in the zeros set $\mathcal{Z}({\Ker}^\nu_{\Gamma,\chi}) $ of ${\Ker}^\nu_{\Gamma,\chi}$.
  \end{lemma}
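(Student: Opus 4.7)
The plan is to exploit the $(\Gamma,\chi)$-automorphic transformation law that ${\Ker}^\nu_{\Gamma,\chi}(\cdot,w)$ satisfies (as an element of ${\mathcal{O}}^{\nu}_{\Gamma,\chi}(\C)$), combined with the hermitian symmetry ${\Ker}^\nu_{\Gamma,\chi}(z,w)=\overline{{\Ker}^\nu_{\Gamma,\chi}(w,z)}$ recorded in Lemma \ref{Z1}. The reverse implication of the lemma is trivial by taking $\gamma=\gamma'=0$, so the content is the forward implication, and for that it suffices to check that the $\Gamma\times\Gamma$-translation action in the $(z,w)$ arguments multiplies ${\Ker}^\nu_{\Gamma,\chi}(z_0,w_0)$ by a nowhere vanishing factor.

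First I would use \eqref{EspO} applied to $f={\Ker}^\nu_{\Gamma,\chi}(\cdot,w)\in {\mathcal{O}}^\nu_{\Gamma,\chi}(\C)$ to obtain, for every $\gamma\in\Gamma$,
\begin{equation*}
{\Ker}^\nu_{\Gamma,\chi}(z+\gamma,w)=\chi(\gamma)\, e^{\nu\left(z+\frac{\gamma}{2}\right)\bgamma}\, {\Ker}^\nu_{\Gamma,\chi}(z,w).
\end{equation*}
Then, invoking Lemma \ref{Z1} and applying the same transformation law to the variable $w$, I get for every $\gamma'\in\Gamma$,
\begin{equation*}
{\Ker}^\nu_{\Gamma,\chi}(z,w+\gamma')=\overline{{\Ker}^\nu_{\Gamma,\chi}(w+\gamma',z)}=\overline{\chi(\gamma')}\, e^{\nu\left(\overline{w}+\frac{\overline{\gamma'}}{2}\right)\gamma'}\, {\Ker}^\nu_{\Gamma,\chi}(z,w).
\end{equation*}

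Iterating the two identities in succession at $(z,w)=(z_0,w_0)$ yields
\begin{equation*}
{\Ker}^\nu_{\Gamma,\chi}(z_0+\gamma,w_0+\gamma')=\chi(\gamma)\overline{\chi(\gamma')}\, e^{\nu\left(z_0+\frac{\gamma}{2}\right)\bgamma+\nu\left(\overline{w_0}+\frac{\overline{\gamma'}}{2}\right)\gamma'}\, {\Ker}^\nu_{\Gamma,\chi}(z_0,w_0).
\end{equation*}
The scalar prefactor is nonzero since the pseudo-character $\chi$ takes values in $U(1)$ (as follows from \eqref{RDQ} applied with $\gamma'=-\gamma$) and the exponential never vanishes. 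Hence ${\Ker}^\nu_{\Gamma,\chi}(z_0,w_0)=0$ forces ${\Ker}^\nu_{\Gamma,\chi}(z_0+\gamma,w_0+\gamma')=0$ for every $(\gamma,\gamma')\in\Gamma\times\Gamma$, establishing the inclusion $(\{z_0\}+\Gamma)\times(\{w_0\}+\Gamma)\subset \mathcal{Z}({\Ker}^\nu_{\Gamma,\chi})$. The converse is immediate, and there is no real obstacle: the only point requiring a moment of care is the passage to the second-variable automorphy, which is handled cleanly through the hermitian symmetry of the reproducing kernel.
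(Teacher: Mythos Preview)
Your proof is correct and follows essentially the same route as the paper: both rest on the $\Gamma$-bi-invariance identity
\[
{\Ker}^\nu_{\Gamma,\chi}(z+\gamma,w+\gamma') = \chi(\gamma)\,\overline{\chi(\gamma')}\, e^{\frac{\nu}{2}|\gamma|^2+\nu z\bgamma}\, e^{\frac{\nu}{2}|\gamma'|^2+\nu\bw\gamma'}\, {\Ker}^\nu_{\Gamma,\chi}(z,w),
\]
from which the forward implication is immediate. The only difference is that the paper quotes this bi-invariance directly from \cite{GI-JMP08}, whereas you re-derive it from \eqref{EspO} together with the hermitian symmetry; one small remark is that \eqref{RDQ} with $\gamma'=-\gamma$ yields $\chi(\gamma)\chi(-\gamma)=1$, hence $\chi(\gamma)\neq 0$, which is all you actually need (the stronger $|\chi|\equiv 1$ claim is the usual convention but is not a consequence of \eqref{RDQ} alone).
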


\begin{proof} We need only to prove "only if". Indeed, making use of the $\Gamma$-bi-invariance property (\cite{GI-JMP08}),
  \begin{equation}\label{GbiInv}
  {\Ker}^\nu_{\Gamma,\chi}(z+\gamma,w+\gamma') = \chi(\gamma) e^{\frac{\nu} 2|\gamma|^2 + \nu z\overline{\gamma}}
     {\Ker}^\nu_{\Gamma,\chi}(z,w) \overline{\chi(\gamma')} e^{\frac{\nu} 2|\gamma'|^2 + \nu\overline{w}\gamma'}
  \end{equation}
 that holds for every $z,w\in \C$ and $\gamma, \gamma' \in \Gamma$,
it follows readily that the elements of the set $(\{z_0\} + \Gamma ) \times (  \{w_0\} + \Gamma )$ are also zeros of ${\Ker}^\nu_{\Gamma,\chi}$ whenever
${\Ker}^\nu_{\Gamma,\chi}(z_0,w_0)=0$.
\end{proof}

The following result describes the set $\mathcal{Z}(\varphi_w)$ of all zeros of $\varphi_w$ in terms of $\mathcal{Z}(\varphi_w|_{\Lambda(\Gamma)})$
that denotes the set of zeros of $\varphi_w$ inside the fundamental cell $\Lambda(\Gamma)$.

\begin{proposition} \label{Z3}
We have
$$\mathcal{Z}(\varphi_w) = \mathcal{Z}(\varphi_w|_{\Lambda(\Gamma)}) + \Gamma .$$
Moreover, the number of zeros of $\varphi_w$ contained in any fundamental cell $\Lambda(\Gamma)$ is constant and independent of $w$.
More exactly, we have
$$\#(\mathcal{Z}(\varphi_w|_{\Lambda(\Gamma)})) = \dim {\mathcal{O}}^{{\nu}}_{\Gamma,{\chi}}(\C).$$
\end{proposition}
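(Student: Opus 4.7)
The first equality follows directly from the $\Gamma$-bi-invariance \eqref{GbiInv} specialized to $\gamma'=0$: as a function of $z$, the function $\varphi_w(z):={\Ker}^\nu_{\Gamma,\chi}(z,w)$ satisfies the automorphy
$$\varphi_w(z+\gamma) = \chi(\gamma)\, e^{\frac{\nu}{2}|\gamma|^2 + \nu z \bgamma}\, \varphi_w(z),$$
so that $\varphi_w \in {\mathcal{O}}^{\nu}_{\Gamma,\chi}(\C)$. Since the multiplier $\chi(\gamma)\,e^{\frac{\nu}{2}|\gamma|^2 + \nu z \bgamma}$ never vanishes, the zero set of $\varphi_w$ is invariant under $\Gamma$-translation. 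Hence every zero of $\varphi_w$ is $\Gamma$-congruent to one in the fundamental cell $\Lambda(\Gamma)$, yielding $\mathcal{Z}(\varphi_w) = \mathcal{Z}(\varphi_w|_{\Lambda(\Gamma)}) + \Gamma$.

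For the counting statement, I would first assume $w\notin \Xi$, so that $\varphi_w\not\equiv 0$ on $\C$. Pick a translated fundamental parallelogram $\Lambda_{\zeta_0} := \{\zeta_0 + t_1\omega_1 + t_2\omega_2;\, 0\leq t_1,t_2 \leq 1\}$, with base point $\zeta_0$ chosen so that no zero of $\varphi_w$ lies on $\partial\Lambda_{\zeta_0}$ (legitimate by the isolation of zeros of a nonzero holomorphic function). The argument principle then gives
$$\#\mathcal{Z}(\varphi_w|_{\Lambda_{\zeta_0}}) = \frac{1}{2\pi i}\oint_{\partial\Lambda_{\zeta_0}} \frac{\varphi_w'(z)}{\varphi_w(z)}\, dz.$$
Using the automorphy to compare the integrand on parallel sides of $\partial\Lambda_{\zeta_0}$ -- noting that $\frac{d}{dz}\log[\chi(\omega_j) e^{\frac{\nu}{2}|\omega_j|^2 + \nu z \overline{\omega_j}}] = \nu\,\overline{\omega_j}$ is a constant -- the integral telescopes to $\nu(\overline{\omega_1}\omega_2 - \omega_1\overline{\omega_2}) = 2i\nu\,\Im(\overline{\omega_1}\omega_2) = 2i\nu\, S(\Gamma)$ for the positively oriented lattice. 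Dividing by $2\pi i$ gives $\frac{\nu}{\pi} S(\Gamma)$, which by \eqref{dimFormula} equals $\dim {\mathcal{O}}^{\nu}_{\Gamma,\chi}(\C)$.

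The main delicate point is to arrange that $\partial\Lambda$ contains no zeros of $\varphi_w$; this is handled by the $\zeta_0$-translation trick, and the resulting count is insensitive to $\zeta_0$ because it depends only on the lattice invariant $S(\Gamma)$. Independence of the count from $w$ (for $w\notin\Xi$) then follows automatically, and the same argument, applied with any other fundamental cell in place of $\Lambda_{\zeta_0}$, shows that the count is the same for every fundamental cell. A minor caveat is that for $w\in\Xi$ the function $\varphi_w$ vanishes identically and the proposition must be interpreted in the non-degenerate regime; this is harmless since $\Xi$ is finite by Theorem \ref{Thm:M1}.
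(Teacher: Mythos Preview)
Your argument is correct and follows essentially the same route as the paper's own proof: the first assertion via the $\Gamma$-bi-invariance (the paper cites Lemma~\ref{Z2}, which is the same content), and the zero count via the argument principle on a translated fundamental parallelogram, pairing opposite sides through the logarithmic-derivative identity $\frac{\varphi_w'(z+\gamma)}{\varphi_w(z+\gamma)}-\frac{\varphi_w'(z)}{\varphi_w(z)}=\nu\bgamma$. Your explicit handling of the degenerate case $w\in\Xi$ is a point the paper's proof leaves implicit; otherwise the two proofs are the same. (One cosmetic remark: be careful with the sign in the telescoping step, since with the paper's convention $S(\Gamma)=\Im(\omega_1\overline{\omega_2})>0$, whereas you wrote $\Im(\overline{\omega_1}\omega_2)$; the final count $\tfrac{\nu}{\pi}S(\Gamma)$ is of course correct once the orientation is fixed.)
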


\begin{remark}
The assertion of Proposition \ref{Z3} is valid for any $f\in {\mathcal{O}}^{\nu}_{\Gamma,\chi}(\C)$.
\end{remark}

\begin{proof}
The first assertion is clear by means of Lemma \ref{Z2}. 
 We need only to determine the number of zeros of the holomorphic function $\varphi_w:= {\Ker}^\nu_{\Gamma,\chi}(\cdot,w)$ inside $\Lambda(\Gamma)$.
 By a small displacement of $\Lambda(\Gamma)$, say $ u +  \Lambda(\Gamma)$ for certain $u\in \C$, we are free to assume that $f\ne 0$ along the border $\partial\Lambda(\Gamma)$ of a fundamental region $\Lambda(\Gamma)$.
  Without loss of generality, we can assume that $\partial\Lambda(\Gamma)$ is a piecewise smooth path that runs around each zero of $\varphi_w$ in $\Lambda(\Gamma)$ exactly one time and that the summits are the origin $O$, $\omega_1$, $\omega_1+\omega_2$ and $\omega_2$. Let denote by  $\mathcal{Z}(\varphi_w|_{\Lambda(\Gamma)})$ the set zeros of $\varphi_w$ inside such $\Lambda(\Gamma)$. hence, by applying the principle argument to $\varphi_w$, we get
  \begin{align*}
\#&(\mathcal{Z}(\varphi_w|_{\Lambda(\Gamma)})) = \frac{1}{2i\pi} \oint_{\partial \Lambda(\Gamma)} \frac{\varphi_w'(z)}{\varphi_w(z)}dz
 = \frac{1}{2i\pi} \left(\int_{O}^{\omega_1} + \int_{\omega_1}^{\omega_1+\omega_2} + \int_{\omega_1+\omega_2}^{\omega_2}   +  \int_{\omega_2}^{0} \right) \\
  &= \frac{1}{2i\pi}
  \int_{0}^{1} \left[\omega_1 \left(\frac{\varphi_w'(t\omega_1)}{\varphi_w(t\omega_1)} - \frac{\varphi_w'(t\omega_1+\omega_2)}{\varphi_w(t\omega_1+\omega_2)}\right)  + \omega_2 \left( \frac{\varphi_w'(\omega_1+t\omega_2)}{\varphi_w(\omega_1+t\omega_2)} -
   \frac{\varphi_w'(t\omega_2)}{\varphi_w(t\omega_2)}\right)\right] dt .\end{align*}
Now, according to the well established fact
 $$ \frac{f'(z+\gamma)}{f(z+\gamma)} -  \frac{f'(z)}{f(z)} = \nu \overline{\gamma} $$
 provided that $f(z+\gamma')\ne 0$; $\gamma'\in \Gamma$, and valid for every $f$ satisfying the functional equation \eqref{EspO}, we can prove the following
 \begin{align*}
\#(\mathcal{Z}(\varphi_w|_{\Lambda(\Gamma)}))
    = \frac{1}{2i\pi} \int_{0}^{1} \nu \left(\omega_1\overline{\omega_2} - \omega_2\overline{\omega_1}\right) dt
  =  \left(\frac{\nu}{\pi}\right) \Im m(\omega_1\overline{\omega_2} ).
 \end{align*}
 This completes the proof.
\end{proof}

\begin{remark}\label{Rem3} According to Proposition \ref{Z3}, one sees that $\mathcal{Z}(\varphi_w)\times \{w\}$ is contained in $\mathcal{Z}({\Ker}^\nu_{\Gamma,\chi}) $
 for every arbitrary fixed $w\in \C$. This shows in particular that  $\mathcal{Z}({\Ker}^\nu_{\Gamma,\chi}) $ is not discrete.
 \end{remark}

\begin{proposition} \label{prop:Xi}
We have $$\mathcal{Z}({\Ker}^\nu_{\Gamma,\chi})=\mathcal{Z}({\Ker}^\nu_{\Gamma,\chi}|_{\Lambda(\Gamma)\times\Lambda(\Gamma)})+\Gamma\times\Gamma$$
and the set
\begin{equation} \label{setXi}
\Xi=\{ \widetilde{w}\in\Lambda(\Gamma) \,  \mbox{ such that } \varphi_{\widetilde{w}}\equiv 0 \text{ on } \Lambda(\Gamma) \}
\end{equation}
is finite with
	\begin{equation}
	\#(\Xi ) \leqslant \dim{\mathcal{O}}^{{\nu}}_{\Gamma,{\chi}}(\C)= \left(\frac{\nu}{\pi}\right) S(\Gamma).
	\end{equation}
 Moreover, we have
	\begin{equation}\label{Xi} \Xi=\bigcap_{f\in{\mathcal{O}}^{{\nu}}_{\Gamma,{\chi}}(\C)}\mathcal{Z}(f)=\bigcap_{w\in\Lambda(\Gamma)}\mathcal{Z}(\varphi_{w})=\bigcap_{w\in\C}\mathcal{Z}(\varphi_{w}).
	\end{equation}
\end{proposition}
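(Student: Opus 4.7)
The plan is to establish the four assertions of Proposition \ref{prop:Xi} in the following order: first the lattice-periodicity of the global zero set, then the three identifications involving $\Xi$, and finally the cardinality bound.

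For the first assertion $\mathcal{Z}({\Ker}^\nu_{\Gamma,\chi})=\mathcal{Z}({\Ker}^\nu_{\Gamma,\chi}|_{\Lambda(\Gamma)\times\Lambda(\Gamma)})+\Gamma\times\Gamma$, I would just invoke Lemma \ref{Z2}: any $(z,w)\in\C\times\C$ can be written as $(z_0+\gamma,w_0+\gamma')$ with $(z_0,w_0)\in\Lambda(\Gamma)\times\Lambda(\Gamma)$ and $(\gamma,\gamma')\in\Gamma\times\Gamma$, and Lemma \ref{Z2} says vanishing is invariant under such translations (using \eqref{GbiInv}). This is the routine step.

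The heart of the proof is the three equalities in \eqref{Xi}. I would proceed by proving the chain of inclusions $\Xi\subseteq\bigcap_{f}\mathcal{Z}(f)\subseteq\bigcap_{w\in\C}\mathcal{Z}(\varphi_w)\subseteq\bigcap_{w\in\Lambda(\Gamma)}\mathcal{Z}(\varphi_w)\subseteq\Xi$. For the first inclusion, if $\widetilde{w}\in\Xi$ then by the functional equation \eqref{EspO} satisfied by $\varphi_{\widetilde{w}}$ (which is $(\Gamma,\chi)$-automorphic) together with part (i), $\varphi_{\widetilde{w}}$ vanishes identically on $\C$; applying the reproducing property \eqref{Rep2} at $z=\widetilde{w}$ and using the conjugate-symmetry $\overline{\varphi_{\widetilde{w}}(w)}={\Ker}^\nu_{\Gamma,\chi}(\widetilde{w},w)$ gives $f(\widetilde{w})=0$ for every $f\in{\mathcal{O}}^{\nu}_{\Gamma,\chi}(\C)$. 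For the second and third inclusions, the functions $\varphi_w$ lie in ${\mathcal{O}}^{\nu}_{\Gamma,\chi}(\C)$, and $\bigcap_w$ can be restricted to $\Lambda(\Gamma)$ because of the $\Gamma$-bi-invariance \eqref{GbiInv}. For the last inclusion, if $\widetilde{w}$ is a common zero of every $\varphi_w|_{\Lambda(\Gamma)}$, then ${\Ker}^\nu_{\Gamma,\chi}(\widetilde{w},w)=0$ for all $w\in\Lambda(\Gamma)$, hence $\varphi_{\widetilde{w}}(w)=\overline{{\Ker}^\nu_{\Gamma,\chi}(\widetilde{w},w)}=0$ on $\Lambda(\Gamma)$, i.e. $\widetilde{w}\in\Xi$.

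The finiteness and the bound then follow quickly. First, $\Xi$ must be a strict subset of $\Lambda(\Gamma)$: otherwise by the $\Gamma$-bi-invariance \eqref{GbiInv} the kernel ${\Ker}^\nu_{\Gamma,\chi}$ would vanish identically, contradicting the reproducing property on the nontrivial space ${\mathcal{O}}^{\nu}_{\Gamma,\chi}(\C)$. Pick any $w_0\in\Lambda(\Gamma)\setminus\Xi$; the function $\varphi_{w_0}\in{\mathcal{O}}^{\nu}_{\Gamma,\chi}(\C)$ is not identically zero, and by the identity $\Xi=\bigcap_{w\in\Lambda(\Gamma)}\mathcal{Z}(\varphi_w|_{\Lambda(\Gamma)})$ just proved, $\Xi\subseteq\mathcal{Z}(\varphi_{w_0}|_{\Lambda(\Gamma)})$. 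Proposition \ref{Z3} then yields $\#(\Xi)\leq\#(\mathcal{Z}(\varphi_{w_0}|_{\Lambda(\Gamma)}))=\dim{\mathcal{O}}^{\nu}_{\Gamma,\chi}(\C)=(\nu/\pi)S(\Gamma)$.

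The main obstacle I anticipate is being careful with the conjugate-symmetry $\varphi_w(z)=\overline{\varphi_z(w)}$ when switching between the two variables of the kernel in the reproducing identity; once that is clearly set up, each inclusion in the chain is immediate. The only other subtlety is justifying $\Xi\subsetneq\Lambda(\Gamma)$ before invoking Proposition \ref{Z3}, which is handled by the nontriviality argument above.
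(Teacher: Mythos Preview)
Your proposal is correct and follows essentially the same route as the paper: the lattice-periodicity of $\mathcal{Z}({\Ker}^\nu_{\Gamma,\chi})$ via the $\Gamma$-bi-invariance \eqref{GbiInv}, the chain of inclusions $\Xi\subseteq\bigcap_f\mathcal{Z}(f)\subseteq\bigcap_{w}\mathcal{Z}(\varphi_w)\subseteq\Xi$ using the reproducing property \eqref{Rep2} together with the conjugate symmetry ${\Ker}^\nu_{\Gamma,\chi}(z,w)=\overline{{\Ker}^\nu_{\Gamma,\chi}(w,z)}$, and the cardinality bound via Proposition \ref{Z3}. Your argument is in fact slightly more careful than the paper's in one place: you explicitly justify $\Xi\subsetneq\Lambda(\Gamma)$ (so that a nonzero $\varphi_{w_0}$ exists) before invoking the zero-count, whereas the paper leaves this implicit.
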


\begin{proof}
The distribution of zeros of ${\Ker}^\nu_{\Gamma,\chi}$ is uniform in the sense that the zeros of ${\Ker}^\nu_{\Gamma,\chi}(z,w)$ are completely determined from those located in $\Lambda(\Gamma)\times\Lambda(\Gamma)$. In fact, $(z,w)\in\mathcal{Z}({\Ker}^\nu_{\Gamma,\chi})$ is equivalent to the existence of unique $(z_0,w_0)\in \Lambda(\Gamma)\times\Lambda(\Gamma)$ and unique $(\gamma_0,\gamma_0')\in \Gamma\times\Gamma$ such that $(z,w)=(z_0,w_0)+(\gamma_0,\gamma_0')$ and $(z_0,w_0)\in\mathcal{Z}({\Ker}^\nu_{\Gamma,\chi}).$
Therefore,
 \begin{equation}
 \mathcal{Z}({\Ker}^\nu_{\Gamma,\chi})= \mathcal{Z}({\Ker}^\nu_{\Gamma,\chi}|_{\Lambda(\Gamma)\times\Lambda(\Gamma)})+\Gamma\times\Gamma.
 \end{equation}
Accordingly, we will concentrate on $\mathcal{Z}({\Ker}^\nu_{\Gamma,\chi}|_{\Lambda(\Gamma)\times\Lambda(\Gamma)})$. Recall that from Proposition \ref{Z3}, every nonzero $f$ belonging to ${\mathcal{O}}^{{\nu}}_{\Gamma,{\chi}}(\C)$, the number of zeros of $f$ in $\Lambda(\Gamma)$ is constant and given by
\begin{equation}
\#(\mathcal{Z}(f|_{\Lambda(\Gamma)})) =  \dim {\mathcal{O}}^{{\nu}}_{\Gamma,{\chi}}(\C) =\left(\frac{\nu}{\pi}\right) S(\Gamma).
\end{equation}
Thus, definition of $\Xi$ given \eqref{setXi}, one can exhibit $\widetilde{w}_0\in {\mathcal{O}}^{{\nu}}_{\Gamma,{\chi}}(\C)$ such that $\varphi_{\widetilde{w}_0}(z)=0$ for all $z$ in $\Lambda(\Gamma)$, whenever $\Xi \ne \emptyset$. Therefore,
\[
f(\widetilde{w}_0)=\int_{\Lambda(\Gamma)}{\Ker}^\nu_{\Gamma,\chi}(\widetilde{w}_0,z)f(z)e^{-\nu|z|^2}dm(z)
=\int_{\Lambda(\Gamma)}\overline{\varphi_{\widetilde{w}_0}(z)}f(z)e^{-\nu|z|^2}dm(z)
=0. 
\]
Hence $\widetilde{w}_0\in \Xi$ and then
\begin{equation}\label{inclusion}
 \Xi \subset \bigcap_{f\in{\mathcal{O}}^{{\nu}}_{\Gamma,{\chi}}(\C)}\mathcal{Z}(f)\subset \bigcap_{w\in\Lambda(\Gamma), \varphi_{w}\neq 0}\mathcal{Z}(\varphi_{w}) 
 \subset \mathcal{Z}(f)
\end{equation}
for all $w\in\Lambda(\Gamma)$ and for all $f\in{\mathcal{O}}^{{\nu}}_{\Gamma,{\chi}}(\C)$. This entails in particular that $\Xi$ is finite with
\begin{equation}\label{sharp}
 0\leqslant \#(\Xi)   \leqslant  \dim {\mathcal{O}}^{{\nu}}_{\Gamma,{\chi}}(\C) = \left(\frac{\nu}{\pi}\right) S(\Gamma) .
\end{equation}

Conversely to \eqref{inclusion}, let $z_0\in\bigcap\limits_{w\in\C}\mathcal{Z}(\varphi_{w})$, then $\varphi_{w}(z_0)=0$ for all $w$ in $\C$. Therefore, $\varphi_{z_0}(w)=0$ for all $w$ in $\Lambda(\Gamma)$. This implies that $\varphi_{z_0}\equiv0$ on $\Lambda(\Gamma)$, since $\varphi_{z_0}$ admits an infinite set of zeros in $\Lambda(\Gamma)$. This shows that $z_0\in\Xi$. Hence, we conclude that
\begin{equation*}
\Xi=\bigcap_{f\in{\mathcal{O}}^{{\nu}}_{\Gamma,{\chi}}(\C)}\mathcal{Z}(f)=\bigcap_{w\in\Lambda(\Gamma)}\mathcal{Z}(\varphi_{w}).
\end{equation*}
This completes the proof.
\end{proof}

\begin{corollary}\label{pem}
The analytic sets of $\left((\mathcal{P}^\nu_{\Gamma,\chi} (e_m)|_{\Lambda(\Gamma)}\right)_m$ and $\left(\varphi_{w}|_{\Lambda(\Gamma)}\right)_{w\in \Lambda(\Gamma)}$ coincide and are given by
\begin{equation}
\bigcap_{m\in\N}\mathcal{Z}(\mathcal{P}^\nu_{\Gamma,\chi} (e_m)|_{\Lambda(\Gamma)})=\bigcap_{w\in\Lambda(\Gamma)}\mathcal{Z}(\varphi_{w}|_{\Lambda(\Gamma)})=\Xi=\{ \widetilde{w}\in\Lambda(\Gamma); \, \varphi_{\widetilde{w}}\equiv 0\}.
\end{equation}
\end{corollary}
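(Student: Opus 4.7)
The plan is to leverage Proposition \ref{prop:Xi}, which already gives the identity $\Xi=\bigcap_{w\in\Lambda(\Gamma)}\mathcal{Z}(\varphi_{w}|_{\Lambda(\Gamma)})$, and to reduce the remaining equality to matching Taylor coefficients of the series \eqref{RepKer2c}. The central observation is that the expansion
$$\varphi_{w}(z)=\Ker^{\nu}_{\Gamma,\chi}(z,w)=\left(\frac{\nu}{\pi}\right)\sum_{m=0}^{\infty}\left[\mathcal{P}^{\nu}_{\Gamma,\chi}(e_{m})\right](z)\frac{(\nu\overline{w})^{m}}{m!}$$
exhibits $\varphi_{w}(z_{0})$ as an entire power series in the variable $\overline{w}$ whose coefficients are scalar multiples of $\bigl[\mathcal{P}^{\nu}_{\Gamma,\chi}(e_{m})\bigr](z_{0})$. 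Thus the vanishing of all $\mathcal{P}^{\nu}_{\Gamma,\chi}(e_{m})$ at a point $z_{0}$ is equivalent to the identity $\varphi_{w}(z_{0})=0$ for every $w\in\C$.

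First I would prove the inclusion $\Xi\subset\bigcap_{m}\mathcal{Z}\bigl(\mathcal{P}^{\nu}_{\Gamma,\chi}(e_{m})|_{\Lambda(\Gamma)}\bigr)$. Taking $z_{0}\in\Xi$, by definition $\varphi_{z_{0}}\equiv 0$ on $\Lambda(\Gamma)$; the $\Gamma$-biinvariance \eqref{GbiInv} then propagates this vanishing to all of $\C$, so $\varphi_{z_{0}}(w)=0$ for every $w\in\C$. Using the hermitian symmetry $\Ker^{\nu}_{\Gamma,\chi}(z,w)=\overline{\Ker^{\nu}_{\Gamma,\chi}(w,z)}$ (Lemma \ref{Z1}), this yields $\varphi_{w}(z_{0})=0$ for every $w\in\C$. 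Substituting $z=z_{0}$ into the expansion \eqref{RepKer2c} produces a power series in $\overline{w}$ that vanishes identically on $\C$; the uniqueness of the Taylor coefficients then forces $\bigl[\mathcal{P}^{\nu}_{\Gamma,\chi}(e_{m})\bigr](z_{0})=0$ for every $m\in\N$.

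The converse inclusion is essentially a run of the same argument in reverse. If $z_{0}\in\bigcap_{m}\mathcal{Z}\bigl(\mathcal{P}^{\nu}_{\Gamma,\chi}(e_{m})|_{\Lambda(\Gamma)}\bigr)$, then every coefficient of the series \eqref{RepKer2c} evaluated at $z_{0}$ vanishes, whence $\varphi_{w}(z_{0})=0$ for all $w\in\C$. By Proposition \ref{prop:Xi}, this places $z_{0}$ in $\bigcap_{w\in\Lambda(\Gamma)}\mathcal{Z}(\varphi_{w})=\Xi$. Combined with the first step and the already established identity $\Xi=\bigcap_{w\in\Lambda(\Gamma)}\mathcal{Z}(\varphi_{w}|_{\Lambda(\Gamma)})$, this gives the chain of equalities asserted in the corollary.

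The only subtle point, and the step that needs to be carried out carefully, is the uniqueness-of-coefficients argument: one must note that the series in \eqref{RepKer2c} converges absolutely and uniformly on compact subsets of $\C\times\C$, so for fixed $z_{0}$ it is a genuine entire power series in $\overline{w}$, and pointwise vanishing on $\C$ allows us to equate each coefficient to zero. Everything else amounts to recollecting results already obtained in Proposition \ref{prop:Xi} and Theorem \ref{Thm:M2}, so no further obstacle should arise.
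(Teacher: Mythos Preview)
Your proof is correct and the converse inclusion is handled exactly as in the paper, via the expansion \eqref{RepKer2c}. For the forward inclusion $\Xi\subset\bigcap_{m}\mathcal{Z}\bigl(\mathcal{P}^{\nu}_{\Gamma,\chi}(e_{m})|_{\Lambda(\Gamma)}\bigr)$, however, the paper takes a shorter path: since Proposition~\ref{prop:Xi} already identifies $\Xi$ with $\bigcap_{f\in\mathcal{O}^{\nu}_{\Gamma,\chi}(\C)}\mathcal{Z}(f)$, and each $\mathcal{P}^{\nu}_{\Gamma,\chi}(e_{m})$ belongs to $\mathcal{O}^{\nu}_{\Gamma,\chi}(\C)$, the inclusion is immediate without invoking the series or uniqueness of coefficients. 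Your argument via coefficient extraction is perfectly valid and has the virtue of making both inclusions rest on the single identity \eqref{RepKer2c}, but it is slightly longer than necessary given what Proposition~\ref{prop:Xi} already provides.
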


\begin{proof}
We have already shown that $\bigcap\limits_{w\in\C}\mathcal{Z}(\varphi_{w}|_{\Lambda(\Gamma)})\subset\mathcal{Z}(f)$, for all $f$ in ${\mathcal{O}}^{{\nu}}_{\Gamma,{\chi}}(\C)$
(see \eqref{inclusion}) and in particular for $f=\mathcal{P}^\nu_{\Gamma,\chi} (e_m)$. The converse
\begin{equation}\label{p(em)}
\bigcap_{m\in\N}\mathcal{Z}(\mathcal{P}^\nu_{\Gamma,\chi} (e_m)|_{\Lambda(\Gamma)})\subset\bigcap_{w\in\C}\mathcal{Z}(\varphi_{w}|_{\Lambda(\Gamma)})=\Xi.
\end{equation}
follows easily making use \eqref{RepKer2c} in Theorem \eqref{Thm:M2} and \eqref{Xi} in Proposition \ref{prop:Xi}. Indeed, if $z_0\in \bigcap\limits_{m\in\N}\mathcal{Z}(\mathcal{P}^\nu_{\Gamma,\chi} (e_m)|_{\Lambda(\Gamma)})$. Then, for every $w\in\C$, we have
\begin{equation}
\varphi_{w}(z_0)={\Ker}^\nu_{\Gamma,\chi}(z_0,w)=\sum\limits_{m\in\N}\mathcal{P}^\nu_{\Gamma,\chi} (e_m)(z_0)\frac{{\overline{w}}^m}{m!}=0.
\end{equation}
\end{proof}

\begin{remark}\label{setZ1}
The fact that $\Xi$ is a finite set implies that
\begin{equation}
\bigcup_{w\in\Lambda(\Gamma)\setminus \Xi} \left( \mathcal{Z}(\varphi_{w}|_{\Lambda(\Gamma)})\times\{w\}\right)
\end{equation}
is a non-countable set of zeros of ${\Ker}^\nu_{\Gamma,\chi}$.
Notice also that $\mathcal{Z}({\Ker}^\nu_{\Gamma,\chi}|_{\Lambda(\Gamma)\times\Lambda(\Gamma)})$
admits accumulation points. This follows from the compactness of $\overline{\Lambda(\Gamma)\times\Lambda(\Gamma)}$ and that the set is infinite and not-discrete for
$\#(\Xi) = \dim {\mathcal{O}}^{{\nu}}_{\Gamma,{\chi}}(\C)$.
\end{remark}

We conclude this section by presenting a proof of Theorem \ref{Thm:M2}.

\begin{proof}[Proof of Theorem \ref{Thm:M2}]
  The proof of \eqref{RepKer2a} lies essentially in the fact that in \eqref{RepKer2} we recognize the exponential generating function (\cite{s})
  \begin{align}\label{genFctgHermite}
e^{ \nu (a \gamma + b \bgamma - a b)} = \sum_{m,n=0}^{\infty} \frac{a^mb^n}{m!n!} H_{m,n}^\nu(\gamma;\bgamma)
  \end{align}
  with $a=-\overline{w}$ and $b=z$.

To establish \eqref{RepKer2c}, we recall that $\left[\mathcal{P}^\nu_{\Gamma,\chi} (e_m)\right](z)$ is given through
     \begin{align}\label{PoincareOpenPf}
 \mathcal{P}^\nu_{\Gamma,\chi} (e_m)(z)
&: = \sum_{\gamma\in \Gamma} \chi(\gamma) (z-\gamma )^m e^{ -\frac{\nu} 2 |\gamma|^2 +  \nu z \bgamma }.
\end{align}
Thus,  making use of the generating function (\cite{s})
  \begin{align}\label{genFctgHermite2}
\nu^n (\overline{\xi} - z )^n e^{ \nu \xi z }   = \sum_{m=0}^{\infty} \frac{z^m}{m!} H_{m,n}^\nu(\xi;\overline{\xi})
  \end{align}
with $\xi =\bgamma$, one obtains
       \begin{align}\label{Peren}
  \left[\mathcal{P}^\nu_{\Gamma,\chi} (e_m)\right](z)
 =   \frac{(-1)^m}{\nu^m}   \sum_{n=0}^{\infty}  \amnGamma   \frac{z^n }{n!}.
\end{align}
Therefore, the relationship \eqref{RepKer2c} follows easily from \eqref{RepKer2a} and \eqref{Peren}.
The proof of Theorem \ref{Thm:M2} is completed.
\end{proof}

  \begin{remark} \label{Rem:Peren}
  As particular case of \eqref{RepKer2c} we have
\begin{align}\label{RepKer2c0}
 {\Ker}^\nu_{\Gamma,\chi}(z,0)&= \left( \frac{\nu}{\pi} \right)  \left[\mathcal{P}^\nu_{\Gamma,\chi} (e_0)\right](z) .
\end{align} 
\end{remark}

\section{\quad Proofs of Theorems \ref{Thm:M3} and \ref{Thm:M4}}

Before giving the proof of Theorem \ref{Thm:M3} we discuss some of its immediate consequences. In fact, under the assumption that the pseudo-character is real valued, we get
    \begin{align}\label{aHmnRKodd1}  
 \sum_{\gamma\in \Gamma}\chi(\gamma)e^{-\frac {\nu }2|\gamma|^2} H_{2m+1,2n}^\nu(\gamma;\bgamma )=0
\end{align}
as well as
    \begin{align}\label{aHmnRKodd2} 
 \sum_{\gamma\in \Gamma}\chi(\gamma)e^{-\frac {\nu }2|\gamma|^2} H_{2m,2n+1}^\nu(\gamma;\bgamma )=0.
\end{align}
This follows by setting $p=q=0$ in \eqref{amn0}.
The lattice sum \eqref{aHmnRKodd2} 
can follows easily from \eqref{aHmnRKodd1} 
by changing $\gamma$ by $-\gamma$ and next taking the complex conjugate, taking into account the fact $\overline{H_{m,n}^{\nu}\left(z;\bz \right)}=H_{n,m}^{\nu}\left(z;\bz \right)$.

\begin{proof}[Proof of Theorem \ref{Thm:M3}]
Under the assumption that $\chi$ is real-valued, we have $\chi(-\gamma)=\overline{\chi(\gamma)}=\chi(\gamma)$. Therefore, the symmetry
  \begin{align}\label{Symmetrie1real}
  \amnpqGamma   = (-1)^{m+n+p+q}  \amnpqGamma
    \end{align}
  follows easily from the definition of $\amnpqGamma$ combined with the well-established fact
$$H_{m,n}^\nu(-\xi,-\overline{\xi} ) = (-1)^{m+n} H_{m,n}^\nu(\xi,\overline{\xi} ) ,$$
being indeed
\begin{align*}
 \amnpqGamma   &=  \sum_{\gamma\in \Gamma} \chi(-\gamma) (-\gamma)^q (-\bgamma)^p e^{-\frac{\nu}{2}|-\gamma|^2} H^\nu_{m,n}(-\gamma,-\bgamma) \\
 &=  (-1)^{m+n+p+q}  \amnpqGamma.
\end{align*}
This entails in particular that $\amnpqGamma  = 0$ whenever $m+n+p+q$ is odd.
\end{proof}

\begin{proof}[Proof of Theorem \ref{Thm:M4}]
 To prove \eqref{LatticeFct}, let $ \nu = \nu_{_\Gamma}$ and $\chi=\chi_{_\Gamma}$ be such that
\begin{equation}\label{NuChi}
\nu_{_{\lambda\Gamma}} = |\lambda|^{-2} \nu_{_\Gamma} \quad \mbox{ and } \quad  \chi_{_{\lambda\Gamma}}(\lambda\gamma) = \chi_{_\Gamma}(\gamma)
 \end{equation}
 for every complex $\lambda \in \C\setminus \{0\}$.  Therefore
 \begin{equation}\label{GaussianLattice}
e^{-\frac{\nu_{_{\lambda\Gamma}}}{2} |\lambda\gamma|^2} = e^{-\frac{\nu_{_\Gamma}}{2} |\gamma|^2} .
\end{equation}
Moreover, using the explicit expression of the complex Hermite polynomials (\cite{s}), to wit
  \begin{align*} 
H_{m,n}^{\nu}(\xi,\overline{\xi}) 
=   m!n! \nu^{m+n}  \sum\limits_{k=0}^{min(m,n)}\frac{(-1)^{k}  }{\nu^{k} k!}\frac{\xi^{m-k}}{(m-k)!} \frac{\overline{\xi}^{n-k}}{(n-k)!},
\end{align*}
we get 
\begin{equation}\label{HermiteLattice}
H_{m,n}^{\nu_{_{\lambda\Gamma}}}(\lambda\gamma;\overline{\lambda}\bgamma)=\frac{1}{\lambda^{n}\overline{\lambda}^{m}}H_{m,n}^{\nu_{_\Gamma}}(\gamma;\bgamma).
\end{equation}
 Hence, by inserting \eqref{NuChi}, \eqref{GaussianLattice} and \eqref{HermiteLattice} in the definition of  $\amnpqlambdaGamma$, we obtain
  \begin{align*}
   \amnpqlambdaGamma = \lambda^{p-n} \overline{\lambda}^{q-m}   \amnpqGamma .
    \end{align*}
\end{proof}

\begin{remark}
The key tool in establishing \eqref{LatticeFct} is the hypothesis \eqref{NuChi}.
The existence of such lattice's functions $\nu=\nu_{_\Gamma}$ and $\chi=\chi_{_\Gamma}$ is assured by considering for example
 $$ \nu = \nu_{_\Gamma} 
 =\frac{2i\pi(\overline{\omega_1}^{k}\omega_2^{k}-\omega_1^{k}\overline{\omega_2}^{k})}
 {\overline{\omega_1}^{k+1}\omega_2^{k+1}-\omega_1^{k+1}\overline{\omega_2}^{k+1}}$$
and the Weierstrass pseudo-character
$$\chi_{_\Gamma}(\gamma)= \left\{\begin{array}{ll}
                       +1 & \qquad \mbox{if} \quad  \gamma /2 \in \Gamma \\
                       -1 & \qquad \mbox{if} \quad  \gamma /2 \not\in \Gamma
                           \end{array}\right. .
$$
Such $\nu$ and $\chi$ are not unique.
\end{remark}

We conclude this section by proving further interesting properties satisfied by the coefficients $\amnpqGamma$.

\begin{lemma} \label{Lem;amnpq}
For any pseudo-character $\chi$ and every integers $m,n,p,q$, we have
      \begin{align}\label{Symmetrie2}
    \overline{\amnpqGamma }  = (-1)^{m+n+p+q} \amnpqGamma .
    \end{align}
Moreover, the generalized complex Hermite-Taylor coefficients $\amnpqGamma $ satisfy the following recurrence formulas
   \begin{align}\label{RecurrencFa}
   \nu a_{m,n}^{p,q+1}(\Gamma|\nu,\chi) = a_{m+1,n}^{p,q}(\Gamma|\nu,\chi) + \nu n a_{m,n-1}^{p,q}(\Gamma|\nu,\chi).
    \end{align}
    and
   \begin{align}\label{RecurrencFb}
   \nu a_{m,n}^{p+1,q}(\Gamma|\nu,\chi) = a_{m,n+1}^{p,q}(\Gamma|\nu,\chi) + \nu m a_{m-1,n}^{p,q}(\Gamma|\nu,\chi).
    \end{align}
\end{lemma}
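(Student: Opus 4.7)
The plan is to attack the three identities separately using (i) complex conjugation combined with the change of variable $\gamma\mapsto -\gamma$ for the symmetry, and (ii) the classical three-term recurrence relations for the complex Hermite polynomials $H_{m,n}^\nu$ for the two recurrences. All ingredients are already in play earlier in the paper (notably in the proof of Theorem~\ref{Thm:M3}), so the proof is essentially a clean reorganization of those computations.

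For the symmetry \eqref{Symmetrie2}, I would start by taking the complex conjugate directly in the defining series \eqref{aHmnpqRK}. Using the standard identity $\overline{H_{m,n}^\nu(z,\bz)} = H_{n,m}^\nu(z,\bz)$ recalled in the discussion following the proof of Theorem~\ref{Thm:M3}, the conjugated sum becomes
\[
\overline{\amnpqGamma} = \sum_{\gamma\in\Gamma} \overline{\chi(\gamma)}\,\bgamma^p\gamma^q\, e^{-\frac{\nu}{2}|\gamma|^2}\, H_{n,m}^\nu(\gamma,\bgamma).
\]
I would then perform the change of variable $\gamma\mapsto -\gamma$, invoking (a) the parity $H_{m,n}^\nu(-z,-\bz)=(-1)^{m+n}H_{m,n}^\nu(z,\bz)$, already used in the proof of Theorem~\ref{Thm:M3}, and (b) the pseudo-character identity $\chi(-\gamma)=\overline{\chi(\gamma)}$, which follows from setting $\gamma'=-\gamma$ in \eqref{RDQ} together with the unimodularity $|\chi(\gamma)|=1$ forced by \eqref{RDQ}. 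These two facts combine to produce the global sign $(-1)^{m+n+p+q}$ and recover the right-hand side of \eqref{Symmetrie2}.

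For the recurrence \eqref{RecurrencFa}, the key input is the classical three-term formula
\[
\nu\bz\, H_{m,n}^\nu(z,\bz) = H_{m+1,n}^\nu(z,\bz) + \nu n\, H_{m,n-1}^\nu(z,\bz),
\]
which is a standard recurrence for the complex Hermite polynomials (see \cite{s}). Specializing $z=\gamma$, multiplying through by $\chi(\gamma)\gamma^p\bgamma^q e^{-\frac{\nu}{2}|\gamma|^2}$, and summing over $\gamma\in\Gamma$, the left-hand side collapses to $\nu\, a_{m,n}^{p,q+1}(\Gamma|\nu,\chi)$, while the right-hand side yields $a_{m+1,n}^{p,q}(\Gamma|\nu,\chi)+\nu n\, a_{m,n-1}^{p,q}(\Gamma|\nu,\chi)$; term-by-term summation is legitimate because the Gaussian factor $e^{-\nu|\gamma|^2/2}$ dominates the polynomial growth of every summand. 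The companion identity \eqref{RecurrencFb} is obtained in exactly the same way, starting from the dual recurrence $\nu z\, H_{m,n}^\nu = H_{m,n+1}^\nu + \nu m\, H_{m-1,n}^\nu$.

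None of the three steps involves a genuinely deep obstruction; the main (minor) subtlety is the identity $\chi(-\gamma)=\overline{\chi(\gamma)}$ for a general pseudo-character, which must be extracted from \eqref{RDQ}. Once that is in place, the symmetry follows from a single change of variable and the two recurrences are pure term-wise applications of the well-known raising/lowering relations for $H_{m,n}^\nu$.
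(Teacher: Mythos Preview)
Your approach mirrors the paper's almost exactly: for \eqref{Symmetrie2} the paper also combines $\overline{\chi(\gamma)}=\chi(-\gamma)$ with the parity/conjugation identity $H_{m,n}^\nu(-\xi,-\overline{\xi})=(-1)^{m+n}\overline{H_{n,m}^\nu(\xi,\overline{\xi})}$, and for \eqref{RecurrencFa} it invokes precisely the three-term recurrence $H_{m+1,n}^\nu=\nu\overline{\xi}H_{m,n}^\nu-\nu n H_{m,n-1}^\nu$ that you wrote. The only divergence is in \eqref{RecurrencFb}: you derive it directly from the companion recurrence $\nu z H_{m,n}^\nu=H_{m,n+1}^\nu+\nu m H_{m-1,n}^\nu$, whereas the paper obtains it by conjugating \eqref{RecurrencFa} and then applying \eqref{Symmetrie2}. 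Your route is slightly more self-contained; the paper's route exhibits the duality between the two recurrences as a formal consequence of the symmetry.

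One caveat that applies to your sketch and the paper's alike: if you carry your computation for \eqref{Symmetrie2} through to the end, conjugation followed by $\gamma\mapsto-\gamma$ produces
\[
\overline{a_{m,n}^{p,q}(\Gamma|\nu,\chi)}=(-1)^{m+n+p+q}\,a_{n,m}^{q,p}(\Gamma|\nu,\chi),
\]
with the pairs $(m,n)$ and $(p,q)$ transposed on the right, because $\overline{H_{m,n}^\nu}=H_{n,m}^\nu$ and $\overline{\gamma^p\bgamma^q}=\gamma^q\bgamma^p$. This transposed identity is exactly what the paper needs to pass from \eqref{RecurrencFa} to \eqref{RecurrencFb} by conjugation, so the displayed formula \eqref{Symmetrie2} appears to carry a typo; your claim to ``recover the right-hand side of \eqref{Symmetrie2}'' glosses over this index swap and should be made explicit.
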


\begin{proof}
The proof of the symmetry relationship \eqref{Symmetrie2} is similar to the one provided for \eqref{Symmetrie1real}.
We need to use the fact $\overline{\chi(\gamma)}=\chi(-\gamma)$ combined with $H_{m,n}^\nu(-\xi,-\overline{\xi} ) = (-1)^{m+n} \overline{H_{n,m}^\nu(\xi,\overline{\xi} )}.$
The recurrence formula \eqref{RecurrencFa} is an immediate consequence of the three recurrence formula
 $$H^\nu_{m+1,n}(\xi,\overline{\xi} )   = \nu \overline{\xi} H^\nu_{m,n}(\xi,\overline{\xi} )   - \nu n H^\nu_{m,n-1}(\xi,\overline{\xi} )$$
satisfied by the complex Hermite polynomials (\cite{s}). While \eqref{RecurrencFb} follows easily from \eqref{RecurrencFa} by taking the complex conjugate, taking into account \eqref{Symmetrie2}.
\end{proof}

\section{\quad Concluding remarks}

Notice finally that, for the data of any $(\Gamma,\chi,\nu)$ (not necessary satisfying the pseudo-character property \eqref{RDQ}), we can define the quantities
$\amnGamma$ and in particular
$$
a_{0,0}(\Gamma|\nu,\chi) : =  \sum_{\gamma\in \Gamma}\chi(\gamma)e^{-\frac {\nu }2|\gamma|^2} .
$$
We claim that
$$ \sum_{\gamma\in \Gamma}\chi(\gamma)e^{-\frac {\nu }2|\gamma|^2} = 0  \Longleftrightarrow \nu =\frac{\pi}{S(\Gamma)}$$
for any given real-valued $\chi$, and then this is a characterization of von Neumann lattices.
In fact, from mathematica, we assert that
$  \sum\limits_{\gamma\in \Gamma}\chi(\gamma)e^{-\frac {\nu }2|\gamma|^2} $
is nonnegative when $\nu > \pi/S(\Gamma)$, vanishes when $\nu = \pi/S(\Gamma)$ and negative otherwise.
When \eqref{RDQ} is satisfied, the space ${\mathcal{O}}^\nu_{\Gamma,\chi}(\C)$ is nontrivial and $a_{0,0}(\Gamma|\nu,\chi)$ coincides with ${\Ker}^\nu_{\Gamma,\chi}(0,0)$. Moreover, we have $\nu = k\pi/S(\Gamma)$ for $k=1,2,\cdots$.

 By numerical approach, it seems that the lattice sum
$$ \sum_{\gamma\in \Gamma}\chi(\gamma)e^{-\frac {\nu }2|\gamma|^2} =\sum_{m,n=-\infty}^{+\infty} e^{\frac{-\nu}{2} \left(m^2+n^2\right)} =: f(\nu) $$ viewed as function in $ \nu \in \mathbb{R}^{*+} $ and associated to $\Gamma=\mathbb{Z}+i\mathbb{Z} $ and $\chi\equiv 1$,
is increasing with upper bound equals to 1 and with no lower bound.
\[ \begin{array}{||c|l||}
\hline\hline
\text{values of $ \nu $} & \hfill\text{Evaluation}\hfill\\
\hline
0.001&  -911.395647437\\ 
0.01 &-100\\
0.1 &-9.999993971929989999314227390629644\\  
1 & 0\\
1.25& 0.3608381973529082048783912510840871280853\\
1.5 & 0.5852075679820198541304883766877041518512\\
1.75 & 0.7276806878628701459775102868169982969575\\
2 & 0.8196872998200458995950539646962870101812\\
3 & 0.9637440634268266347151459429784258329968\\
4 & 0.9925162797522077475130738581932461342102\\
8 & 0.9999860505819289371809872750653816298146\\
8\times 10 & 0.9999999999999999999999999999999999999999\\
\hline\hline
\end{array}\]
The above values  are calculated using the following Mathematica code
\begin{lstlisting}[language=Mathematica]
NSum[(-1)^(i+j+i*j)*Exp[(-\[Nu]*Pi)/2*(i^2+j^2)],{i,-Infinity,Infinity},{j,-Infinity,Infinity},NSumTerms->30, Method->"AlternatingSigns",WorkingPrecision->40]
\end{lstlisting}

Form the summation formulas
$$ \sum_{\gamma\in \Gamma}\chi(\gamma)e^{-\frac {\nu }2|\gamma|^2} = 0, $$
when $\nu =\frac{\pi}{S(\Gamma)}$, and
$$
 \sum_{\gamma\in \Gamma} \chi(\gamma) \gamma^p \bgamma^q e^{-\frac{\nu}{2}|\gamma|^2} H^\nu_{m,n}(\gamma,\bgamma) = 0
 $$
 for real-valued $\chi$ and integers $m,n,p,q$ such that $m+n+p+q$ is odd (see \eqref{amn0} in Theorem \ref{Thm:M3}), one can drive
 nice identities involving special functions such as Jacobi theta functions (see for example \cite{Lawden2013}).
 The first summation taken in the case $ \Gamma=\mathbb{Z}+i\mathbb{Z} $ gives rise to the following identity
\begin{equation}\label{eq:theta}
\vartheta _2(0,e^{-2 \nu })^2 - \vartheta _3(0,e^{-2 \nu })^2 - 2 \vartheta _2(0,e^{-2 \nu }) \vartheta _3(0,e^{-2 \nu }) = 0.
\end{equation}
Indeed, by splitting such summation over:  i) both $ m $ and $ n $ are odd, ii) both $ m $ and $ n $ are even and lastly, iii) $ m $ and $ n $ are of different parity, and next using the following equalities 
\begin{alignat*}{4}
\mbox{i)}  &\sum_{m=-\infty }^{\infty } \sum _{n=-\infty }^{\infty } e^{ \left(\frac{-\nu}{2} \left((2 m+1)^2+(2 n+1)^2\right)\right)} 
&&= \nu^{-\frac{1}{2}}\left( \frac{\pi }{2}\right)^{\frac{1}{2}} \vartheta _2(0,e^{-2 \nu }) \vartheta _3\left(\frac{\pi }{2},e^{-\frac{\pi ^2}{2 \nu }}\right) \\
&&&\vspace{3cm}=\vartheta _2(0,e^{-2 \nu })^2\\
\mbox{ii)}& \sum _{m=-\infty }^{\infty } \sum _{n=-\infty }^{\infty } e^{\frac{-\nu}{2} \left((2 m)^2+(2 n)^2\right)} &&= \vartheta _3(0,e^{-2 \nu })^2\\
\mbox{iii)}&\sum _{m=-\infty }^{\infty } \sum _{n=-\infty }^{\infty } e^{-\nu  \left((2 m)^2+(2 n+1)^2\right)} &&= \vartheta _2(0,e^{-2 \nu }) \vartheta _3(0,e^{-2 \nu })
\end{alignat*}

The proof of the identity \eqref{eq:theta} requires detailed knowledge of the relationships
between theta functions. The proof of the more complicated identities of the type (2.11) by direct methods is evidently a nontrivial problem. 
This is, word by word, the same remark asserted by Perelomov in \cite{Perelomov71}.

\end{document}